\newcommand{\R}{\mathbb{R}}
\newcommand{\lie}[1]{\mathfrak{#1}}     
\newcommand{\Lie}{\mathcal{L}}      
\newcommand{\hook}{\lrcorner\,}
\newcommand{\SU}{\mathrm{SU}}
\newcommand{\Gtwo}{\mathrm{G}_2}
\newcommand{\GL}{\mathrm{GL}}
\newcommand{\dfn}[1]{\emph{#1}}
\theoremstyle{plain}
\newtheorem{proposition}{Proposition}
\newtheorem{theorem}[proposition]{Theorem}
\newtheorem{lemma}[proposition]{Lemma}
\theoremstyle{definition}
\theoremstyle{remark}
\newtheorem*{remark}{Remark}
\newcommand{\Span}[1]{\operatorname{Span}\left\{#1\right\}}
\begin{document}
\title{Nilmanifolds with a calibrated $\Gtwo$-structure}
\author{Diego Conti and Marisa Fern\'andez}
\maketitle

\begin{abstract}
We introduce obstructions to the existence of a calibrated $\Gtwo$-structure
on a Lie algebra $\lie{g}$ of dimension seven, not necessarily
nilpotent. In particular, we prove that
if there is a Lie algebra epimorphism
from $\lie{g}$ to a
six-dimensional Lie algebra $\lie{h}$ with kernel contained in the center of $\lie{g}$, then
$\lie{h}$ has a symplectic form.
As a consequence, we obtain a classification of the
nilpotent Lie algebras that admit
a calibrated $\Gtwo$-structure.
\end{abstract}

\vskip5pt{\small\textbf{MSC classification}: Primary 53C38;
Secondary 53C15, 17B30}\vskip0.5pt
\vskip4pt{\small\textbf{Key words}: calibrated $G_2$ forms,
nilpotent Lie algebras, Lefschetz property}\vskip10pt

\section{Introduction}
A Riemannian $7$-manifold with holonomy contained in $\Gtwo$ can be characterized by the existence of an associated parallel $3$-form. The first examples of complete metrics
with holonomy $\Gtwo$ were given by Bryant and Salamon in~\cite{BrS}, and the first examples of compact manifolds with such a metric were constructed by Joyce in~\cite{Joyce}. Explicit examples on solvable Lie groups were constructed in~\cite{ChiossiFino}; examples on nilpotent Lie groups can be obtained by taking a nilpotent six-dimensional group with a half-flat structure and solving Hitchin's evolution equations (see \cite{Conti:HalfFlat,Hitchin:StableForms}). More generally, one can consider $\Gtwo$-structures where the associated $3$-form $\varphi$ is closed: then $\varphi$ defines a calibration (\cite{HarveyLawson}), and the $\Gtwo$-structure is  said to be \dfn{calibrated}. An equivalent condition is that the intrinsic torsion
 lies in the component $\mathcal{X}_2\cong\lie{g}_2$ (\cite{FernandezGray}).

Compact calibrated $\Gtwo$ manifolds have interesting curvature properties. It is well known that a $\Gtwo$ holonomy manifold is Ricci-flat, or equivalently, both Einstein and scalar-flat. On a compact calibrated $\Gtwo$ manifold, both the Einstein condition (\cite{CleytonIvanov}) and scalar-flatness (\cite{Bryant}) are equivalent to the holonomy being contained in $\Gtwo$. In fact, \cite{Bryant} shows that the scalar curvature is always non-positive.

Constructing examples is not a straightforward task. For instance, \cite{CleytonSwann} classifies calibrated $\Gtwo$-manifolds on which a simple groups acts with cohomogeneity one, and  no compact manifold occurs in this list. On the other hand, the second author exhibited the first example of a compact calibrated $\Gtwo$-manifold that does not have holonomy $\Gtwo$ \cite{Fernandez}.
This example is given in terms of a nilpotent Lie algebra $\lie{g}$ and an element of $\Lambda^3\lie{g}^*$ that corresponds to a closed left-invariant $3$-form on the associated simply-connected Lie group. Since the structure constants are rational, there exists a uniform discrete
subgroup  \cite{Malcev}; the quotient, called a nilmanifold, has an induced calibrated $\Gtwo$-structure.

\smallskip
In this paper we pursue this approach, and classify the nilpotent $7$-dimensional Lie algebras that admit a calibrated $\Gtwo$-structure. Since the structure constants turn out to be rational, each Lie algebra determines a nilmanifold. So, we obtain 12 compact calibrated $\Gtwo$ nilmanifolds (see  Theorem~\ref{G2-calibrated-decomposable}, Lemma~\ref{lemma:withparameter} and Lemma~\ref{lemma:noparameters}). Three of them are reducible: they are the product
of a circle with a symplectic half-flat nilmanifold, the latter being classified in \cite{ContiTomassini}. The remaining nine are new.

The proof is based on two necessary conditions that a Lie algebra must satisfy for a calibrated $\Gtwo$-structure to exist (see Proposition~\ref{prop:epi} and Lemma~\ref{lemma:obstr2}).

Our first obstruction is related to a construction of \cite{ApostolovSalamon}. Suppose that $M$ is a $7$-manifold with a calibrated $\Gtwo$ form $\varphi$, and $X$ is a unit Killing field, i.e. $\Lie_X\varphi=0$. Then if $\eta=X^\flat$, we can write
\[\varphi=\omega\wedge\eta+\psi^+,\]
where $\omega$, $\psi+$ and $d\eta$ are basic forms with respect to the $1$-dimensional foliation defined by $X$. Suppose in addition that $X$ is the fundamental vector field of a free $S^1$ action; then basic forms can be identified with forms on $M/S^1$. By
\[0=\Lie_X\varphi=d(X\hook \varphi) =d\omega,\]
$\omega$ is a symplectic form on $M/S^1$. Moreover
\[0=\omega\wedge d\eta +d\psi^+\]
implies that $[d\eta]$ is in the kernel of the map
\[H^2(M/S^1)\to H^4(M/S^1), \quad [\beta]\to [\beta\wedge\omega].\]
If this map is an isomorphism, then the $S^1$-bundle is trivial: this puts topological restrictions on $M$, which translate to algebraic conditions in our setup. A similar method was used in \cite{ContiTomassini}.

In principle, these restrictions reduce our problem to the classification of symplectic nilpotent Lie algebras of dimension six for which the map $H^2\to H^4$ is non-injective (see the remark before Lemma~\ref{lemma:obstr2}). The complexity of the required calculations, however, motivate a different approach. In analogy with \cite{Conti:HalfFlat}, we introduce a second obstruction, that
requires computing the space of closed $3$-forms. It consists in the observation that $(X\hook\varphi)^3$ must be nonzero, whenever $X$ is a nonzero vector and $\varphi$ a $3$-form defining a $\Gtwo$-structure.

The final ingredient is Gong's classification of $7$-dimensional indecomposable nilpotent Lie algebras (\cite{Gong}). This list contains $140$ Lie algebras and $9$ one-parameter families; in addition, there are $35$ decomposable nilpotent Lie algebras (\cite{Magnin}, \cite{Salamon}). Calculations on a case-by-case basis  show that our list of $12$ examples is complete.

\section{Calibrated $\Gtwo$-structures and obstructions}
\label{sec:obstructions}
In this section we show
obstructions to the existence of a calibrated $\Gtwo$ form on a Lie algebra
(not necessarily nilpotent).  First, we recall
some definitions and results about $G_2$-structures.

Let us consider the space $\mathbb {O}$ of the Cayley numbers, which
is a non-associative algebra over $\R$ of dimension $8$. Thus, we can identify
$\R^7$ with the subspace of  $\mathbb {O}$ consisting of pure imaginary Cayley numbers. Then,
the product on $\mathbb {O}$ defines on  $\R^7$
the $3$-form given by
\begin{equation}
 \label{eqn:Gtwoform}
 e^{127}+e^{347}+e^{567}+e^{135} -e^{236}-e^{146}-e^{245}
\end{equation}
(see \cite{FernandezGray} and \cite{Gray} for details), where $\{e^1,\dotsc, e^7\}$ is the standard basis of $(\R^7)^*$.
Here, $e^{127}$ stands for $e^1\wedge e^2\wedge e^7$, and so on.
The group $G_2$ is the stabilizer of \eqref{eqn:Gtwoform}
under the standard action of $\GL(7,\R)$ on $\Lambda^3(\R^7)^*$.
$G_2$ is one of the exceptional Lie groups, and it is a compact, connected, simply connected
simple Lie subgroup of $SO(7)$ of dimension $14$.

A $\Gtwo$-structure on a $7$-dimensional Riemannian manifold $(M,g)$ is a reduction
of the structure group $O(7)$ of the frame bundle to $G_2$. Manifolds admitting
a $\Gtwo$-structure are called $\Gtwo$ manifolds. The existence of a
$\Gtwo$-structure on $(M,g)$ is determined by a global $3$-form $\varphi$ (the $G_2$ form)
which can be locally written as \eqref{eqn:Gtwoform} with respect to some (local)
 basis $\{e^1,\dotsc, e^7\}$ of the (local) $1$-forms on $M$.
We say that the $\Gtwo$ manifold $M$ has a {\em calibrated $G_2$-structure} if there is a $G_2$-structure on $M$ such that the $3$-form
$\varphi$ is closed, and so
$\varphi$ defines a calibration \cite{HarveyLawson}.

If $G$ is a $7$-dimensional Lie group with Lie algebra $\lie{g}$, then a $\Gtwo$-structure on $G$ is left-invariant if and only if the corresponding
$3$-form is left-invariant. Thus, a left invariant $\Gtwo$-structure on $G$ corresponds to an element $\varphi$ of $\Lambda^3\lie{g}^*$ that can be written as \eqref{eqn:Gtwoform} with respect to some coframe $\{e^1,\dotsc, e^7\}$ on $\lie{g}^*$; and we shall say that $\varphi$ defines a $\Gtwo$-structure on $\lie{g}$.  We say that a $\Gtwo$-structure on $\lie{g}$ is {\em calibrated} if $\varphi$ is closed, i.e.
\[
d\varphi=0,\]
where $d$ denotes the Chevalley-Eilenberg differential on $\lie{g}^*$. If $\Gamma$ is a discrete subgroup of $G$, a $\Gtwo$-structure on $\lie{g}$ induces a
$\Gtwo$-structure on the quotient $\Gamma\backslash G$.
Moreover, in \cite{Malcev} it is proved that if $\lie{g}$ is nilpotent with rational structure
constants, then the associated simply connected Lie group $G$ admits a uniform discrete
subgroup $\Gamma$. Therefore, a $\Gtwo$-structure on $\lie{g}$ determines
a $\Gtwo$-structure on the compact manifold $\Gamma\backslash G$, which is called a
compact nilmanifold; and if $\lie{g}$ has a calibrated
$\Gtwo$-structure, the $\Gtwo$-structure on $\Gamma\backslash G$ is also calibrated.

\smallskip
In order to show obstructions to the existence of a calibrated $\Gtwo$ form on a Lie algebra
$\lie{g}$, let us consider first a direct sum $\lie{g}=\lie{h}\oplus\R$. If $\varphi$ is a $\Gtwo$ form on $\lie{g}$, and the decomposition is orthogonal with respect to the underlying metric, then
\[\varphi=\omega\wedge \eta+\psi^+,\]
where $\omega,\psi^+$ are forms on $\lie{h}$ and $\eta$ generates
the dual of the ideal $\R$. The pair $(\omega,\psi^+)$ defines an $\SU(3)$-structure on $\lie{h}$. The condition that $\varphi$ is closed is equivalent to both $\omega$ and $\psi^+$ being closed; this means that the $\SU(3)$-structure is \dfn{symplectic half-flat}. There are exactly three nilpotent Lie algebras of dimension six that admit a symplectic half-flat structure, classified in~\cite{ContiTomassini}. So, if we focus our attention on decomposable nilpotent Lie algebras, there are at least three $7$-dimensional Lie algebras with a calibrated $\Gtwo$-structure; we will see that these are all.

More generally, every $7$-dimensional nilpotent Lie algebra fibres over a nilpotent Lie algebra of dimension six. In fact if $\xi$  is in the center of $\lie{g}$, then the quotient ${\lie{g}}/{\Span{\xi}}$ has a unique Lie algebra structure that makes the projection map
\[\lie{g}\to \frac{\lie{g}}{\Span{\xi}}\]
a Lie algebra morphism. Moreover, due to the nilpotency assumption every epimorphism $\lie{g}\to\lie{h}$, with $\lie{h}$ of dimension six, is of this form.
Using the pullback, we can identify forms on the quotient with {\em basic forms} on $\lie{g}$; in this
setting, $\alpha$ is {\em basic} if  $\xi\hook\alpha=0$.

Given a $\Gtwo$-structure on $\lie{g}$ with associated $3$-form $\varphi$ and a nonzero vector $\xi$ in the center, let $\eta=\xi^\flat$; then we can write
\[\varphi= \omega\wedge\eta+\psi^+, \quad \xi\hook\omega=0=\xi\hook\psi^+,\]
and up to a normalization coefficient the forms $(\omega,\psi^+)$ define an $\SU(3)$-structure on the six-dimensional quotient (see also~\cite{ApostolovSalamon}). In analogy with the case of a circle bundle, we shall think of $\eta$ as a connection form, and $d\eta$ as the curvature.
\begin{proposition}
\label{prop:epi}
Let $\lie{g}$ be a $7$-dimensional Lie algebra with a calibrated $\Gtwo$-structure and a non-trivial center. If $\pi \colon \lie{g}\to\lie{h}$ is a Lie algebra epimorphism with kernel contained in the center, and $\lie{h}$ of dimension six, then
$\lie{h}$ admits a symplectic form $\omega$, and the curvature form is in the kernel of
\begin{equation}
 \label{eqn:Lefschetz}
H^2(\lie{h^*}) \xrightarrow{\cdot\wedge\omega} H^4(\lie{h^*}).
\end{equation}
If the curvature form is exact on $\lie{h}$, then $\lie{g}\cong\lie{h}\oplus\R$ as Lie algebras.
\end{proposition}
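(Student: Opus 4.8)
The plan is to read all three assertions off the single equation $d\varphi=0$, using the splitting of $\varphi$ that is dual to the central kernel. Since $\dim\lie{g}-\dim\lie{h}=1$, the kernel of $\pi$ is spanned by a nonzero central vector $\xi$; set $\eta=\xi^\flat$ and write $\varphi=\omega\wedge\eta+\psi^+$ with $\xi\hook\omega=0=\xi\hook\psi^+$, as in the discussion preceding the statement. The observation that drives everything is that $\xi$ is central, so $\ad_\xi=0$ and hence $\Lie_\xi$ annihilates every invariant form. Consequently, for any basic form $\beta$ Cartan's formula gives $\xi\hook d\beta=\Lie_\xi\beta-d(\xi\hook\beta)=0$; that is, $d$ preserves basic forms. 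In particular $d\eta$, $d\omega$ and $d\psi^+$ are all basic, so under the injective cochain map $\pi^*$ they descend to (closed or not) forms on $\lie{h}$.

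First I would expand $d\varphi=d\omega\wedge\eta+\omega\wedge d\eta+d\psi^+$ and decompose it into its basic part and its multiple of $\eta$; this is legitimate because every form on $\lie{g}$ splits uniquely as (basic)$\,+\,$(basic)$\,\wedge\,\eta$. Setting $d\varphi=0$ then forces the two conditions $d\omega=0$ and $\omega\wedge d\eta+d\psi^+=0$ separately. The first says $\omega$ is a closed $2$-form on $\lie{h}$; since $(\omega,\psi^+)$ is the defining pair of the induced $\SU(3)$-structure, $\omega$ is its Hermitian form and hence non-degenerate, so $\omega$ is symplectic. This settles the first assertion.

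For the Lefschetz claim I would push the second relation down to $\lie{h}$. Write $F$ for the $2$-form on $\lie{h}$ with $\pi^*F=d\eta$, the curvature form; it is closed because $d(d\eta)=0$ and $\pi^*$ is injective, so $[F]\in H^2(\lie{h}^*)$. Likewise $\omega$ and $\psi^+$ descend, and the identity $\omega\wedge d\eta=-d\psi^+$, being an equality of basic forms, descends by injectivity of $\pi^*$ to $F\wedge\omega=-d\psi^+$ on $\lie{h}$ (abusing notation for the descended forms). Hence $[F]\wedge[\omega]=[-d\psi^+]=0$ in $H^4(\lie{h}^*)$, i.e. $[F]$ lies in the kernel of \eqref{eqn:Lefschetz}.

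Finally, suppose the curvature is exact, say $F=d\alpha$ for some $\alpha\in\lie{h}^*$. Then $d\eta=\pi^*F=d(\pi^*\alpha)$, so $\tilde\eta:=\eta-\pi^*\alpha$ is a closed $1$-form on $\lie{g}$ with $\tilde\eta(\xi)=\eta(\xi)=\abs{\xi}^2\neq0$, because $\xi\in\ker\pi$. Closedness means $\tilde\eta$ vanishes on $[\lie{g},\lie{g}]$, so $\lie{k}:=\ker\tilde\eta$ is a six-dimensional ideal, and $\tilde\eta(\xi)\neq0$ gives $\lie{g}=\lie{k}\oplus\Span{\xi}$ as vector spaces; as $\xi$ is central this is a direct sum of ideals, whence $\lie{g}\cong\lie{k}\oplus\R$. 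Since $\lie{k}\cap\ker\pi=0$ and $\dim\lie{k}=\dim\lie{h}=6$, the restriction $\pi|_{\lie{k}}$ is a Lie algebra isomorphism onto $\lie{h}$, giving $\lie{g}\cong\lie{h}\oplus\R$. I expect the genuinely delicate points to be bookkeeping rather than depth: carefully establishing that $d$ preserves basic forms (which rests entirely on the centrality of $\xi$) and tracking the descent along $\pi^*$, so that the cohomological statement is really about $\lie{h}$ and not $\lie{g}$; the non-degeneracy of $\omega$ is borrowed from the $\SU(3)$-structure, and the final splitting is elementary once $\tilde\eta$ is produced.
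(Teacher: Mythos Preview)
Your argument is correct and follows essentially the same route as the paper's proof: decompose $\varphi$ along $\eta$, read off $d\omega=0$ and $\omega\wedge d\eta+d\psi^+=0$ from the basic/non-basic components of $d\varphi=0$, and for the final claim replace $\eta$ by the closed connection form $\tilde\eta=\eta-\pi^*\alpha$ to split $\lie{g}$. Your write-up is in fact more explicit than the paper's about why $d$ preserves basic forms and why $\ker\tilde\eta$ is an ideal isomorphic to $\lie{h}$, but the strategy is identical.
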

\begin{proof}
Write
\[\varphi=\pi^*\omega\wedge\eta+\pi^*\psi^+\]
where $(\omega,\psi^+)$ are forms on $\lie{h}$. Since $d$ commutes with the pullback,
\[0=d\varphi=d\pi^*\omega\wedge\eta+ \pi^*\omega\wedge d\eta + \pi^*d\psi^+,\]
where $\pi^*d\omega$, $d\eta$ and $\pi^*d\psi^+$ are basic. Thus $\omega$ is a symplectic form and $d\eta$ is in the kernel of \eqref{eqn:Lefschetz}.

Now suppose that $d\eta$ is exact on $\lie{h}$. Then,
the epimorphism $\pi \colon \lie{g}\to\lie{h}$ is trivial, that is $\lie{g}=\lie{h}\oplus\R$.
More precisely, we can choose a different, closed connection form $\tilde\eta$, and $\lie{g}=\ker\tilde\eta\oplus\ker\pi$ is a direct sum of Lie algebras; by construction, $\ker\tilde\eta$ is isomorphic to $\lie{h}$.
\end{proof}

\smallskip
\begin{remark}
In the previous Proposition, we must notice that when the
curvature form is zero, $(\omega, \psi^+)$ is a symplectic half-flat structure
on $\lie{h}$. Therefore, if  $\lie{h}$ is nilpotent, by \cite{ContiTomassini} $\lie{h}$ is one of
\[(0,0,0,0,0,0), \quad (0,0,0,0,12,13), \quad (0,0,0,12,13,23).\]
With notation from \cite{Salamon},$(0,0,0,0,12,13)$ represents a the Lie algebra with a fixed basis $e^1,\dotsc e^6$ of $\lie{g}^*$, satisfying
\[de^1=0=de^3+de^3=de^4, \quad de^5=e^{12},de^6=e^{13}.\]
\end{remark}
\smallskip
\begin{remark}
Another obstruction to the existence of a calibrated $\Gtwo$-structure on a nilpotent Lie algebra is given by the condition $b_3>0$. Indeed, if $\varphi$ is a closed  $\Gtwo$ form on a nilpotent Lie algebra $\lie{g}$, and $X$ is a nonzero vector in the center of $\lie{g}$, then $\Lie_X\varphi=0$, so $X\hook\varphi$ is closed. If $\varphi$ were exact, say $\varphi=d\beta$, then the $7$-form
\[(X\hook\varphi)\wedge(X\hook\varphi)\wedge\varphi=d((X\hook\varphi)\wedge(X\hook\varphi)\wedge\beta)\]
would also be exact, hence zero, which is absurd. On the other hand, $b_3$ is always positive on a nilpotent Lie algebra of dimension seven.
\end{remark}
\smallskip
Proposition~\ref{prop:epi} motivates the following definition. We say that a $6$-dimensional Lie algebra $\lie{h}$ satisfies the \dfn{2-Lefschetz property} if, for every symplectic structure on $\lie{h}$, the map \eqref{eqn:Lefschetz} is an isomorphism. This condition holds trivially when $\lie{h}$ has no symplectic structure, namely when $\lie{h}$ is one of
\begin{equation*}
 \label{nonsymplectic}
\begin{aligned}
&(0, 0, 0, 12, 23, 14 + 35), &&( 0, 0, 0, 12, 23, 14 - 35),\\
&(0, 0, 0, 12, 13, 14 + 35),  &&(0, 0, 0, 0, 12, 15 + 34),\\
&(0, 0, 0, 0, 0, 12 + 34),      &&(0,0,12,13,14+23,34+52), \\
&(0, 0, 12, 13, 14, 34 + 52), &&(0,0,0,12,14,24). \\
\end{aligned}
\end{equation*}

\smallskip
It is well known~\cite{BensonGordon} that if $(\lie{h},\omega)$ is a
$6$-dimensional, symplectic nilpotent Lie algebra, the map
\begin{equation*}
H^1(\lie{h^*}) \xrightarrow{\cdot\wedge\omega^2} H^5(\lie{h^*}).
\end{equation*}
is not surjective. However, in the next proposition, we prove that some of those Lie algebras
satisfy the 2-Lefschetz property.

\smallskip
\begin{proposition}
\label{prop:2Lefschetz}
Among 6-dimensional nilpotent Lie algebras with a symplectic structure, those that satisfy the 2-Lefschetz property are
\[(0,0,0,0,0,0);  \quad (0, 0, 12, 13, 23, 14); \quad (0, 0, 12, 13, 23, 14 + 25).\]
\end{proposition}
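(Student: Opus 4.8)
The plan is to reduce the statement to a finite case-by-case analysis over the symplectic six-dimensional nilpotent Lie algebras, using Poincar\'e duality to replace the isomorphism test by an injectivity test. Since $\lie{h}$ is nilpotent it is unimodular, so its Chevalley--Eilenberg complex satisfies Poincar\'e duality; in particular $b_2=b_4$, and moreover there are no nonzero exact $6$\dash forms, so $\omega^3$ depends only on the class $[\omega]\in H^2(\lie{h^*})$ and $\omega$ is symplectic if and only if $[\omega]^3\neq0$ in $H^6(\lie{h^*})\cong\R$. Because $\dim H^2(\lie{h^*})=\dim H^4(\lie{h^*})$, the map \eqref{eqn:Lefschetz} is an isomorphism if and only if it is injective; hence the $2$\dash Lefschetz property fails for $\lie{h}$ exactly when there exist a symplectic $\omega$ and a class $0\neq[\beta]\in H^2(\lie{h^*})$ with $\omega\wedge\beta$ exact. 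I would start from the known classification of six\dash dimensional nilpotent Lie algebras, discard the eight non\dash symplectic ones listed before the statement, and examine the remaining symplectic algebras one at a time.

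The abelian algebra $(0,0,0,0,0,0)$ is settled by the classical hard Lefschetz theorem for a linear symplectic form: here $d=0$, so $H^\bullet(\lie{h^*})$ is the full exterior algebra $\Lambda^\bullet(\R^6)^*$ and $\cdot\wedge\omega\colon\Lambda^2\to\Lambda^4$ is the Lefschetz operator $L^{n-k}$ with $n=3$, $k=2$, which is an isomorphism by the standard $\lie{sl}(2,\R)$\dash representation theory of the exterior algebra of a symplectic vector space. As every nondegenerate element of $\Lambda^2(\R^6)^*$ is such a linear symplectic form, the property holds for all of them.

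For each symplectic algebra not appearing in the list, I would refute the $2$\dash Lefschetz property by producing a single explicit witness: a symplectic form $\omega$ together with a nonzero class $[\beta]\in H^2(\lie{h^*})$ for which $\omega\wedge\beta=d\gamma$ for some $\gamma\in\Lambda^3\lie{h^*}$. By the injectivity criterion above, one such pair is enough. In practice one computes $H^2(\lie{h^*})$ and $H^4(\lie{h^*})$ from the structure equations and then inspects the products of the generators of $H^2(\lie{h^*})$ with a conveniently chosen symplectic $\omega$; the sparse structure constants make a vanishing product easy to locate.

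The genuinely delicate step is to establish the property for the two nonabelian algebras $(0,0,12,13,23,14)$ and $(0,0,12,13,23,14+25)$, where \eqref{eqn:Lefschetz} must be shown to be an isomorphism for \emph{every} symplectic form, not merely for a generic one. For this I would take a general closed representative $\omega$ of a class in $H^2(\lie{h^*})$ with indeterminate coefficients, write the matrix of $\cdot\wedge\omega$ in fixed bases of $H^2(\lie{h^*})$ and $H^4(\lie{h^*})$, and compute its determinant $p(\omega)$, a homogeneous polynomial in the coefficients of $[\omega]$. The goal is then to verify that $p$ vanishes only where $[\omega]^3=0$, that is, that the real zero locus of $p$ is contained in the non\dash symplectic locus $\{[\omega]^3=0\}$, so that $p(\omega)\neq0$ for every symplectic $\omega$. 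Carrying out this determinant computation over the entire parameter space of symplectic forms, rather than exhibiting a single good $\omega$, is the main obstacle, and it is precisely what separates these two algebras from those on which the property fails.
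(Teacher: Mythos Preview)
Your plan is correct and matches the paper's proof in all its essential steps: both reduce to injectivity via Poincar\'e duality, treat the abelian case via the linear Lefschetz isomorphism, and for $(0,0,12,13,23,14)$ and $(0,0,12,13,23,14+25)$ write the generic closed $\omega$ with free parameters, compute the matrix of $\cdot\wedge\omega$ on $H^2\to H^4$, and check that its determinant vanishes only on the non-symplectic locus.

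The one point where the paper is more economical than your outline is in disposing of the remaining symplectic algebras. Rather than hunting for an explicit symplectic witness $(\omega,[\beta])$ on each of them individually, the paper observes that for all but three of them the bilinear pairing $H^2\times H^2\to H^4$ is degenerate in the uniform sense that \emph{every} nonzero $\beta\in H^2$ makes $\cdot\wedge\beta$ non-injective; this kills injectivity for any symplectic $\omega$ in one stroke, with no need to single one out. Only the three exceptions $(0,0,12,13,23,14-25)$, $(0,0,0,12,13,23)$ and $(0,0,0,0,0,12)$ require the explicit witness you describe. Your approach would certainly succeed, but checking this uniform degeneracy of the cup product on $H^2$ is a worthwhile shortcut that spares you most of the witness-finding.
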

\begin{proof}
In the abelian case, the bilinear map \[H^2\otimes H^2\to H^4\] induced by the wedge product is non-degenerate, in the sense that for every nonzero $\beta\in H^2$, the induced linear map $\cdot\wedge\beta\colon H^2\to H^4$ is an isomorphism.

For the second Lie algebra, the cohomology class of a generic symplectic form is represented by \[\omega= \lambda_1 e^{16} +   \lambda_2 {(e^{15}+e^{24})}+ \lambda_3 e^{25}+ \lambda_4 {(e^{34}-e^{26})};\]
non-degeneracy implies $\lambda_4\neq0$. The map $H^2\to H^4$ of \eqref{eqn:Lefschetz} is represented by the matrix
\[\left(\begin{array}{cccc}\lambda_3&2  \lambda_4&\lambda_1&2  \lambda_2\\\lambda_4&0&0&\lambda_1\\0&0&0&-2  \lambda_4\\0&0&\lambda_4&\lambda_3\end{array}\right)\]
which is invertible by the assumption $\lambda_4\neq0$.

Similarly, for the last Lie algebra
\[\omega= \lambda_1e^{14}+  \lambda_2{(e^{15}+e^{24})}  - \lambda_3 {(e^{26}-e^{34})}+ \lambda_4 {(e^{16}+e^{35})}.\]
The map  \eqref{eqn:Lefschetz}  is represented by
\[\left(\begin{array}{cccc}\lambda_3&2  \lambda_4&\lambda_1&2  \lambda_2\\- \lambda_4&2  \lambda_3&2  \lambda_2&- \lambda_1\\0&0&-2  \lambda_3&2  \lambda_4\\0&0&- \lambda_4&- \lambda_3\end{array}\right),
\]
which is invertible unless
$\lambda_4^2+\lambda_3^2=0$,
which makes $\omega$ degenerate.

For all but three of the remaining Lie algebras, we observe that the bilinear map \[H^2\otimes H^2\to H^4\] is degenerate in the sense that, for every nonzero $\beta\in H^2$, the map \[\alpha\to\alpha\wedge\beta,\quad H^2\to H^4\]
is non-injective. The three exceptions are
\[(0, 0, 12, 13, 23, 14 - 25), \quad (0, 0, 0, 12, 13, 23), \quad (0,0,0,0,0,12).\]
However, either Lie algebra has a symplectic form that makes the map \eqref{eqn:Lefschetz} non-injective.
In fact, on the Lie algebra $\lie{h}$ defined by the equations $(0, 0, 12, 13, 23, 14 - 25)$,
consider the symplectic form
$$
\omega=- e^{16}+e^{15}+e^{35}+e^{34}+e^{24}- e^{26}.
$$
Then one can check
that $e^{14}+e^{25}+e^{15}+e^{24}$ defines a non-trivial class in $H^2(\lie{h^*})$, but
$$
(e^{14}+e^{25}+e^{15}+e^{24})\wedge\omega=2 e^{1245}=2d(e^{146}).
$$
Now, on the Lie algebra $(0, 0, 0, 12, 13, 23)$ we consider the symplectic form
$\omega=e^{14}+e^{26}+e^{35}$. Then,
$$
(-e^{15}-e^{24}+e^{36})\wedge\omega=d(e^{456});
$$
finally, on the Lie algebra $(0,0,0,0,0,12)$,
\[(e^{16}+e^{25}+e^{34})\wedge e^{13} =-de^{356}.\qedhere\]
\end{proof}

In principle, one could try to classify all pairs $(\lie{h},\omega)$, with $\lie{h}$ nilpotent of dimension six and $\omega$ a symplectic form on $\lie{h}$, for which \eqref{eqn:Lefschetz} is non-injective. This means that
$\omega\wedge \gamma=d\psi^+$, for some $\psi^+\in\Lambda^3\lie{h}^*$ and
some closed non-exact $2$-form $\gamma\in\Lambda^2\lie{h}^*$. If in addition, $(\omega,\psi^+)$ are compatible in the sense that they define an $\SU(3)$-structure, then declaring $de^7=\gamma$ one obtains a $7$-dimensional Lie algebra $\lie{g}$ with a calibrated $\Gtwo$-structure. By Proposition~\ref{prop:epi}, all calibrated $\Gtwo$-structures on indecomposable nilpotent Lie algebras are obtained in this way.

However, these calculations turn out to be difficult (although in one dimension less a similar approach was pursued succesfully in \cite{ContiTomassini}), and for this reason we shall use a different method
(see Section~\ref{sec:classification}),
starting with Gong's classification of $7$-dimensional Lie algebras. In fact, given a Lie algebra, it is straightforward to compute the space of its closed $3$-forms. In the spirit of~\cite{Conti:HalfFlat}, the existence of a calibrated $\Gtwo$-structure puts restrictions on this space. Whilst straightforward, the following result turns out to give an effective obstruction.
\begin{lemma}
\label{lemma:obstr2}
Let $\lie{g}$ be a $7$-dimensional nilpotent Lie algebra. If there is a nonzero $X$ in $\lie{g}$ such that $(X\hook \phi)^3=0$ for every closed $3$-form on $\lie{g}$, then $\lie{g}$ has no calibrated $\Gtwo$-structure.
\end{lemma}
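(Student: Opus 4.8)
The plan is to reduce the statement to a single pointwise fact about $\Gtwo$ forms and then argue by contradiction. Suppose $\lie{g}$ does carry a calibrated $\Gtwo$-structure. By definition this is a \emph{closed} $3$-form $\varphi\in\Lambda^3\lie{g}^*$ that can be written as \eqref{eqn:Gtwoform} in some coframe. Because $\varphi$ is closed, the hypothesis of the lemma applies with $\phi=\varphi$ and forces $(X\hook\varphi)^3=0$ for the given nonzero $X$. It therefore suffices to prove that for \emph{any} $3$-form $\varphi$ defining a $\Gtwo$-structure on a $7$-dimensional vector space and any nonzero $X$, the $6$-form $(X\hook\varphi)^3$ is nonzero; this contradiction completes the proof. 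Note that nilpotency plays no role in the logical argument — it only supplies the setting in which the hypothesis on closed $3$-forms is actually checked.

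Reducing to the standard model is immediate, since the condition $(X\hook\varphi)^3\neq0$ is $\GL(7,\R)$-invariant. Writing $\varphi=A^*\varphi_0$ with $\varphi_0$ the standard form \eqref{eqn:Gtwoform} and using the naturality of contraction, $X\hook(A^*\varphi_0)=A^*(AX\hook\varphi_0)$, one gets $(X\hook\varphi)^3=A^*\big((AX\hook\varphi_0)^3\big)$, which is nonzero exactly when $(AX\hook\varphi_0)^3$ is. Hence I may assume $\varphi=\varphi_0$, and it remains to verify $(Y\hook\varphi_0)^3\neq0$ for every nonzero $Y\in\R^7$.

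For the model the key computation is with $Y=e_7$: contracting \eqref{eqn:Gtwoform} gives $e_7\hook\varphi_0=e^{12}+e^{34}+e^{56}$, whose cube is $6\,e^{123456}\neq0$. To pass from this single vector to an arbitrary nonzero $Y$ I would invoke the classical fact that $\Gtwo\subset\SO(7)$ acts transitively on the unit sphere $S^6$. Given a unit $Y$, choose $g\in\Gtwo$ with $gY=e_7$; then $g^*\varphi_0=\varphi_0$ and $Y\hook(g^*\varphi_0)=g^*(gY\hook\varphi_0)$ give $Y\hook\varphi_0=g^*(e_7\hook\varphi_0)$, so $(Y\hook\varphi_0)^3=6\,g^*(e^{123456})$. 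As $g\in\SO(7)$ acts invertibly on $\Lambda^6(\R^7)^*$ this is nonzero, and rescaling handles non-unit $Y$.

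The only real content — and the step I expect to need the most care — is this pointwise nonvanishing. A cleaner, coordinate-free route that treats all $Y$ at once is through the associated cross product, defined by $g(Y\times Z,W)=\varphi_0(Y,Z,W)$: the kernel of the $2$-form $Y\hook\varphi_0$ is exactly $\{Z:Y\times Z=0\}$, and since the octonionic cross product obeys $\abs{Y\times Z}^2=\abs{Y}^2\abs{Z}^2-\langle Y,Z\rangle^2$ it vanishes only on vectors parallel to $Y$. Thus $Y\hook\varphi_0$ has one-dimensional kernel, hence maximal rank $6$ on a $7$-dimensional space, which is precisely the condition for its cube to be nonzero. Either route closes the argument; the transitivity argument is the more elementary, while the cross-product argument explains conceptually why maximal rank always occurs.
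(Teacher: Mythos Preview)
Your proof is correct and follows the same logic as the paper, whose own proof consists of the single word ``Obvious''. The paper simply takes the pointwise fact $(X\hook\varphi)^3\neq0$ for any $\Gtwo$ form $\varphi$ and nonzero $X$ as known (it is stated in the paragraph immediately preceding the lemma), whereas you go further and supply two complete arguments for it---both are valid, and the cross-product one explains conceptually why the rank is always maximal.
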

\begin{proof}
Obvious.
\end{proof}
\begin{remark}
When $\lie{g}$ fibers over a non-symplectic Lie algebra $\lie{h}$, this obstruction is satisfied
automatically. Indeed, suppose $\pi\colon \lie{g}\to\lie{h}$ is a Lie algebra epimorphism; then any
closed $3$-form on $\lie{g}$ can be written as
\[\pi^*\omega\wedge\eta+\pi^*\psi^+,\]
as in the proof of Proposition~\ref{prop:epi}. So $\omega$ is a closed form on $\lie{h}$; if we assume $\lie{h}$ has no symplectic form, then $\omega^3=0$. Then the condition of Lemma~\ref{lemma:obstr2} is satisfied with $X$ a generator of $\ker\pi$.
\end{remark}

\section{Decomposable case}
In this section we classify the decomposable nilpotent Lie algebras with a calibrated $\Gtwo$-structure. Indeed, we prove:

\begin{theorem}\label{G2-calibrated-decomposable}
Among the $35$ decomposable nilpotent Lie algebras of dimension $7$,
those that have a calibrated $\Gtwo$-structure are
\[(0,0,0,0,0,0,0), \quad (0,0,0,0,12,13,0), \quad (0,0,0,12,13,23,0).\]
\end{theorem}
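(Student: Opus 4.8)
The plan is to combine the two obstructions established earlier with the known structure theory of six-dimensional nilpotent Lie algebras. A decomposable seven-dimensional nilpotent Lie algebra $\lie{g}$ splits as $\lie{g}=\lie{h}\oplus\R$ for some nilpotent $\lie{h}$, which may itself be decomposable; moreover $\R$ sits in the center, so Proposition~\ref{prop:epi} applies with the projection $\pi\colon\lie{g}\to\lie{h}$. Since the kernel is spanned by a central generator and the curvature form $d\eta$ of the trivial $S^1$-bundle is automatically exact (indeed zero), the last sentence of Proposition~\ref{prop:epi} forces any calibrated $\Gtwo$-structure to come from a symplectic half-flat structure on $\lie{h}$, as spelled out in the remark following it. Thus the first step is to observe that a decomposable $\lie{g}=\lie{h}\oplus\R$ admits a calibrated $\Gtwo$-structure \emph{if and only if} $\lie{h}$ admits a symplectic half-flat $\SU(3)$-structure.

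Once this reduction is in place, the result follows immediately from the classification of six-dimensional nilpotent Lie algebras with a symplectic half-flat structure in~\cite{ContiTomassini}, which yields exactly the three algebras $(0,0,0,0,0,0)$, $(0,0,0,0,12,13)$ and $(0,0,0,12,13,23)$. Adjoining a central $\R$ summand produces precisely the three seven-dimensional algebras listed in the statement, namely $(0,0,0,0,0,0,0)$, $(0,0,0,0,12,13,0)$ and $(0,0,0,12,13,23,0)$. It remains to verify that no \emph{other} decomposable nilpotent Lie algebra of dimension seven carries such a structure; this is where the subtlety lies, because a priori the splitting $\lie{g}=\lie{h}\oplus\R$ used by $\pi$ need not be orthogonal with respect to the metric induced by $\varphi$, nor need the central line chosen for $\pi$ coincide with the $\R$ factor of a given decomposition.

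The heart of the argument, and what I expect to be the main obstacle, is therefore to show that the reduction above is forced regardless of how $\lie{g}$ decomposes: one must check that every decomposable $\lie{g}$ which is \emph{not} one of the three listed fails to fiber over a six-dimensional nilpotent Lie algebra admitting a symplectic half-flat structure. The clean way to handle this is to run Proposition~\ref{prop:epi} over \emph{every} admissible projection $\pi$ with central kernel: for a calibrated structure to exist, \emph{some} such $\lie{h}$ must be symplectic and have non-injective Lefschetz map \eqref{eqn:Lefschetz}, and in the decomposable case, because the curvature $d\eta$ can be taken exact, $\lie{h}$ must in fact be symplectic half-flat. So I would enumerate the thirty-five decomposable nilpotent Lie algebras, discard those which admit no central-kernel epimorphism onto one of the three half-flat algebras of~\cite{ContiTomassini}, and for the survivors confirm directly that the compatibility conditions defining an $\SU(3)$-structure can be met. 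For the remaining cases Lemma~\ref{lemma:obstr2} supplies a quick finish: if for every decomposition the six-dimensional quotient is non-symplectic, then by the remark following Lemma~\ref{lemma:obstr2} one has $(X\hook\phi)^3=0$ for every closed $3$-form $\phi$, ruling out a calibrated $\Gtwo$-structure.

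Finally I would note that the three resulting algebras all have rational structure constants, so by~\cite{Malcev} each determines a compact nilmanifold carrying an induced calibrated $\Gtwo$-structure; these are the three reducible examples mentioned in the introduction, realized as products of a circle with the symplectic half-flat nilmanifolds of~\cite{ContiTomassini}.
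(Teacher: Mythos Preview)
Your reduction has two genuine gaps.

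First, not every decomposable seven-dimensional nilpotent Lie algebra is of the form $\lie{h}\oplus\R$. The algebra $(0,0,0,0,12,34,36)$ splits as a product of the three-dimensional Heisenberg algebra and a four-dimensional nilpotent algebra, with no abelian one-dimensional factor. The paper treats this case separately via Lemma~\ref{lemma:obstr2}, computing $e_7\hook Z^3$ explicitly and observing it contains no nondegenerate $2$-form.

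Second, and more seriously, the ``if and only if'' you assert is not established by Proposition~\ref{prop:epi}. Given a calibrated $\varphi$ on $\lie{g}=\lie{h}\oplus\R$ and the central generator $\xi$ of the $\R$-factor, the decomposition $\varphi=\omega\wedge\eta+\psi^+$ uses $\eta=\xi^\flat$ for the $\Gtwo$ metric; this $\eta$ has exact but in general \emph{nonzero} differential, so you only get $d\omega=0$ and $d\psi^+=-\omega\wedge d\eta$, not $d\psi^+=0$. The remark after Proposition~\ref{prop:epi} requires $d\eta=0$, not merely $[d\eta]=0$. Replacing $\eta$ by a closed $\tilde\eta=\eta-\alpha$ changes $\psi^+$ to $\psi^++\omega\wedge\alpha$, and there is no reason this new pair remains an $\SU(3)$-structure. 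So Proposition~\ref{prop:epi} alone only tells you $\lie{h}$ is symplectic, not symplectic half-flat; many of the thirty-odd $\lie{h}\oplus\R$ have symplectic $\lie{h}$ (e.g.\ $(0,0,0,0,0,12,0)$), and for these your argument gives nothing. Your fallback sentence also inverts the logic: the remark after Lemma~\ref{lemma:obstr2} applies when \emph{one} central quotient is non-symplectic, not when all are.

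The paper does not attempt your equivalence. It rules out the eight $\lie{h}\oplus\R$ with non-symplectic $\lie{h}$ via Proposition~\ref{prop:epi}, finds five more that fibre (via a \emph{different} central vector) over a non-symplectic six-dimensional algebra, and for the remaining eighteen computes a basis of closed $3$-forms and checks $(e_6\hook\phi)^3=0$ identically (Table~\ref{table:decomp}). That case-by-case computation is the real content of the proof, and your proposal does not supply a substitute for it.
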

\begin{proof}
By the remark at the beginning of Section~\ref{sec:obstructions}, we know that these three Lie algebras have
a calibrated $\Gtwo$-structure (see \cite{Fernandez} where the second of these Lie algebras was considered).
In fact, on the non-abelian Lie algebras $(0,0,0,0,12,13)$ and
$(0,0,0,12,13,23)$ we can consider the symplectic half flat structure $(\omega_1, \psi_1^+)$ and
$(\omega_2, \psi_2^+)$, respectively, defined by
$$
\omega_1 = e^{14} + e^{26} + e^{35},  \quad  \psi_1^+ = e^{123} + e^{156} + e^{245} - e^{346},
$$
 and
 $$
\omega_2 = e^{16} + 2e^{25} + e^{34},  \quad  \psi_2^+ = e^{123} + e^{145} + e^{246} - e^{356}.
$$
Using Lemma \ref{lemma:obstr2}, we can see that the decomposable Lie algebra
\[0,0,0,0,12,34,36\]
has no calibrated $\Gtwo$-structure. Indeed a basis of the space $Z^3$ of the closed
$3$-forms is given by
\begin{multline*}e^{123},e^{124},e^{125},e^{134},e^{135},e^{136},e^{137},e^{145},e^{146},e^{234},e^{235},e^{236},\\
e^{237},e^{245},e^{246},-e^{126}+e^{345},e^{346},e^{347},e^{127}+e^{356},e^{367},e^{467}.
\end{multline*}
Thus $e_7\hook Z^3$ is the span of $e^{13}, e^{23},e^{34},e^{12},e^{36},e^{46}$, which contains
only degenerate forms.

Since this is the only decomposable nilpotent Lie algebra of dimension seven which does not have
the form $\lie{h}\oplus\R$,
it remains to prove that if $\lie{g}=\lie{h}\oplus\R$ has a calibrated $\Gtwo$ form, then $\lie{g}$ must be as in the statement.

Clearly, if $\lie{h}$ is one of the
eight Lie algebras defined by  (\ref{nonsymplectic}),
Proposition  \ref{prop:epi} implies that  the Lie algebra $\lie{g}=\lie{h}\oplus\R$ has no
calibrated $\Gtwo$ form.

Also, one can check that
none of the
five Lie algebras defined by
\begin{gather*}
(0,0,12,13,23,14,0), \quad (0,0,12,13,23,14 + 25,0), \quad (0,0,12,13,23,14 - 25,0),\\
\quad (0,0,0,0,13 + 42,14 + 23,0),  \quad (0,0,0,0,12,14 + 23,0),
\end{gather*}
has a calibrated $\Gtwo$ form because each of these is a
bundle over a non-symplectic
Lie algebra of dimension six. Explicitly, the base of the bundle and curvature form are given by
\begin{gather*}
\pi \colon (0,0,12,13,23,14,0)\to (0,0,12,13,23,0),   \quad   d\eta = e^{14},  \quad\\
\pi \colon (0,0,12,13,23,14 + 25,0)\to (0,0,12,13,23,0),   \quad   d\eta = e^{14}+e^{25},  \quad\\
\pi \colon (0,0,12,13,23,14 - 25,0)\to (0,0,12,13,23,0),   \quad   d\eta = e^{14}-e^{25},  \quad\\
\pi \colon (0,0,0,0,13 + 42,14 + 23,0)\to (0,0,0,0,13 + 42,0),   \quad   d\eta = e^{14}+e^{23},  \quad\\
\pi \colon (0,0,0,0,12,14 + 23,0)\to (0,0,0,0,14 + 23,0),   \quad   d\eta = e^{12}.
\end{gather*}
For each of the remaining $18$
Lie algebras of the form $\lie{g}=\lie{h}\oplus\R$, listed in Table~\ref{table:decomp} alongside with a basis of the space of closed $3$-forms, one can check that the hypothesis of Lemma~\ref{lemma:obstr2} is satisfied with $X=e_6$.
\end{proof}
\begin{table}
 \caption{\label{table:decomp} Closed $3$-forms on decomposable Lie algebras}
\[\begin{array}{|ll|}
\hline
(0,0,12,13,14+23,24+15,0) & e^{123},e^{124},e^{125},e^{126},e^{127},e^{134},e^{135},e^{137},e^{136}+e^{145},e^{146},e^{147},\\
\multicolumn{2}{|c|}{e^{234},e^{235}+e^{136},e^{237},-e^{236}+e^{245},e^{157}+e^{247},e^{167}+e^{257},-\frac{1}{2} e^{156}+e^{345}-\frac{1}{2} e^{246},e^{167}+e^{347}}\\
(0, 0, 0, 12, 14, 15 + 23,0) & e^{123},e^{124},e^{125},e^{126},e^{127},e^{134},e^{135},e^{136},e^{137},e^{145},e^{147},e^{157},\\
\multicolumn{2}{|c|}{e^{234},e^{235}-e^{146},-e^{156}+e^{236},e^{237},e^{245},e^{247},e^{156}+e^{345},-e^{167}+e^{347},-e^{267}+e^{457}}\\
(0, 0, 0, 12, 14 - 23, 15 + 34,0) & e^{123},e^{124},e^{125},e^{127},e^{134},e^{135},e^{136},e^{137},e^{126}+e^{145},e^{146},e^{147},\\
\multicolumn{2}{|c|}{e^{234},e^{235}-e^{126},e^{237},e^{245},e^{247},e^{236}+e^{345},e^{347}+e^{157},e^{357}+e^{167}}\\
(0, 0, 0, 12, 14, 15,0) & e^{123},e^{124},e^{125},e^{126},e^{127},e^{134},e^{135},e^{136},e^{137},e^{145},e^{146},e^{147},\\
\multicolumn{2}{|c|}{e^{156},e^{157},e^{167},e^{234},e^{237},e^{245},e^{247},e^{345}+e^{236},-e^{267}+e^{457}}\\
(0, 0, 0, 12, 13, 14 + 23,0) & e^{123},e^{124},e^{125},e^{126},e^{127},e^{134},e^{135},e^{136},e^{137},e^{145},e^{147},e^{157},\\
\multicolumn{2}{|c|}{e^{234},e^{235},e^{236}-e^{146},e^{237},e^{245}+e^{146},e^{246},e^{247},e^{257}+e^{167},e^{345}+e^{156},e^{347}-e^{167},e^{357}}\\
(0, 0, 0, 12, 13, 24,0) & e^{123},e^{124},e^{125},e^{126},e^{127},e^{134},e^{135},e^{137},e^{145},e^{146},e^{147},e^{157},\\
\multicolumn{2}{|c|}{e^{234},e^{235},e^{236},e^{237},e^{245}-e^{136},e^{246},e^{247},e^{267},e^{256}+e^{346},e^{257}+e^{347},e^{357}}\\
(0, 0, 0, 12, 14, 15 + 24,0) & e^{123},e^{124},e^{125},e^{126},e^{127},e^{134},e^{135},e^{137},e^{145},e^{146},e^{147},e^{157},\\
\multicolumn{2}{|c|}{e^{234},e^{136}+e^{235},e^{237},e^{245},e^{246}-e^{156},e^{247},e^{257}+e^{167},e^{236}+e^{345},e^{457}-e^{267}}\\
(0, 0, 0, 12, 14, 15+ 23+ 24,0) & e^{123},e^{124},e^{125},e^{126},e^{127},e^{134},e^{135},e^{137},e^{145},e^{136}+e^{146},e^{147},\\
\multicolumn{2}{|c|}{e^{157},e^{234},e^{136}+e^{235},e^{237},e^{245},e^{236}+e^{246}-e^{156},e^{247},e^{236}+e^{345},e^{347}-e^{257}-e^{167},e^{457}-e^{267}}\\
(0, 0, 12, 13, 14, 15,0) & e^{123},e^{124},e^{125},e^{126},e^{127},e^{134},e^{135},e^{136},e^{137},e^{145},e^{146},e^{147},\\
\multicolumn{2}{|c|}{e^{156},e^{157},e^{167},e^{234},e^{237},e^{245}-e^{236},e^{347}-e^{257}}\\
(0,0,12,13,14,23+15,0) & e^{123},e^{124},e^{125},e^{126},e^{127},e^{134},e^{135},e^{136},e^{137},e^{145},e^{147},e^{157},\\
\multicolumn{2}{|c|}{e^{234},e^{235}-e^{146},e^{236}-e^{156},e^{237},e^{245}-e^{156},e^{247}+e^{167},e^{347}-e^{257}}\\
(0,0,0,12,13+42,14+23,0) & e^{123},e^{124},e^{125},e^{126},e^{127},e^{134},e^{136},e^{137},e^{145},-e^{135}+e^{146},e^{147},\\
\multicolumn{2}{|c|}{e^{234},e^{235},e^{236}-e^{135},e^{237},e^{245}+e^{135},e^{246},e^{247},e^{167}+e^{257},-e^{157}+e^{267},-e^{167}+e^{347}}\\
(0,0,0,12,14,13+42,0) & e^{123},e^{124},e^{125},e^{126},e^{127},e^{134},e^{135},e^{137},e^{145},e^{146},e^{147},e^{157},\\
\multicolumn{2}{|c|}{e^{234},e^{235}-e^{136},e^{236},e^{237},e^{245},e^{246}+e^{136},e^{247},e^{257}-e^{167},e^{267}+e^{347}}\\
(0,0,0,12,13+14,24,0) & e^{123},e^{124},e^{125},e^{126},e^{127},e^{134},e^{135},e^{137},e^{145},e^{146},e^{147},e^{157},\\
\multicolumn{2}{|c|}{e^{234},e^{235}+e^{136},e^{236},e^{237},e^{245}-e^{136},e^{246},e^{247},e^{267},e^{257}+e^{167}+e^{347}}\\
(0,0,0,12,13,14,0) & e^{123},e^{124},e^{125},e^{126},e^{127},e^{134},e^{135},e^{136},e^{137},e^{145},e^{146},e^{147},\\
\multicolumn{2}{|c|}{e^{156},e^{157},e^{167},e^{234},e^{235},e^{237},e^{245}+e^{236},e^{246},e^{247},e^{347}+e^{257},e^{357}}\\
(0,0,0,0,0,12,0) & e^{123},e^{124},e^{125},e^{126},e^{127},e^{134},e^{135},e^{136},e^{137},e^{145},e^{146},e^{147},\\
\multicolumn{2}{|c|}{e^{156},e^{157},e^{167},e^{234},e^{235},e^{236},e^{237},e^{245},e^{246},e^{247},e^{256},e^{257},e^{267},e^{345},e^{347},e^{357},e^{457}}\\
(0, 0, 0, 0, 12, 14 + 25,0) & e^{123},e^{124},e^{125},e^{126},e^{127},e^{134},e^{135},e^{137},e^{145},e^{147},e^{156},e^{157},\\
\multicolumn{2}{|c|}{e^{234},e^{235},e^{237},e^{245},e^{246},e^{247},-e^{146}+e^{256},e^{257},-e^{236}+e^{345},e^{347},e^{457}+e^{267}}\\
(0,0,0,0,12,34,0) &e^{123},e^{124},e^{125},e^{127},e^{134},e^{135},e^{136},e^{137},e^{145},e^{146},e^{147},e^{157},\\
\multicolumn{2}{|c|}{e^{234},e^{235},e^{236},e^{237},e^{245},e^{246},e^{247},e^{257},e^{345}-e^{126},e^{346},e^{347},e^{367},e^{467}}\\
(0,0,0,0,12,15,0) & e^{123},e^{124},e^{125},e^{126},e^{127},e^{134},e^{135},e^{136},e^{137},e^{145},e^{146},e^{147},\\
\multicolumn{2}{|c|}{e^{156},e^{157},e^{167},e^{234},e^{235},e^{237},e^{245},e^{247},e^{256},e^{257},e^{347}}\\
\hline
\end{array}\]
\end{table}

\section{Indecomposable case}
\label{sec:classification}
In this section we complete the classification of $7$-dimensional nilpotent Lie algebras with a calibrated $\Gtwo$-structure. We have seen that there are  exactly three decomposable Lie algebras of this type. In order to discuss the indecomposable Lie algebras, we refer to Gong's classification in~\cite{Gong}. This list consists of 140 Lie algebras and 9 one-parameter families.

The one-parameter families are the following:
\begin{align*}
147E&=\left(0,0,0,e^{12},e^{23},- e^{13}, \lambda e^{26}-e^{15}- {(-1+\lambda)} e^{34}\right),& \lambda\neq0,1;\\
1357M&=\left(0,0,e^{12},0,e^{24}+e^{13},e^{14},- {(-1+\lambda)} e^{34}+e^{15}+ e^{26} \lambda\right),& \lambda\neq0;\\
1357N&=\left(0,0,e^{12},0,e^{13}+e^{24},e^{14},e^{46}+e^{34}+e^{15}+ e^{23} \lambda\right);\\
1357S&=\left(0,0,e^{12},0,e^{13},e^{24}+e^{23},e^{25}+e^{34}+e^{16}+e^{15}+ \lambda e^{26}\right), &\lambda\neq1;\\
12457N&=\left(0,0,e^{12},e^{13},e^{23},e^{24}+e^{15}, \lambda e^{25}+e^{26}+e^{34}-e^{35}+e^{16}+e^{14}\right);\\
123457I&=\left(0,0,e^{12},e^{13},e^{14}+e^{23},e^{15}+e^{24}, \lambda e^{25}- {(-1+\lambda)} e^{34}+e^{16}\right);\\
147E1&=\left(0,0,0,e^{12},e^{23},- e^{13},2 e^{26}-2 e^{34}- e^{16} \lambda+ \lambda e^{25}\right) , &\lambda>1;\\
1357QRS1&=\left(0,0,e^{12},0,e^{13}+e^{24},e^{14}-e^{23}, e^{26} \lambda+e^{15}- e^{34} {(-1+\lambda)}\right), &\lambda\neq0;\\
12457N2&=\left(0,0,e^{12},e^{13},e^{23},-e^{14}-e^{25},e^{15}-e^{35}+e^{16}+e^{24}+ e^{25} \lambda\right), & \lambda\geq0.
\end{align*}

Recall
that a $3$-form of type $\Gtwo$ has the form \eqref{eqn:Gtwoform}
with respect to some coframe $e^1,\dotsc, e^7$; such a coframe identifies the $\Gtwo$-structure.
\begin{lemma}
\label{lemma:withparameter}
Exactly three of the above Lie algebras admit a calibrated $\Gtwo$-structure. Explicit examples are given in terms of a coframe by
\begin{gather*}
1357N(\lambda=1): \quad
\sqrt3(2  e^{1}-  e^{7}-  e^{6}-  e^{5}),
\sqrt{3} (e^{4}+ e^{3}-2  e^{2}- e^{6}+  e^{5}), 2 e^{3}-e^{6}, 2  e^{5},\\
-e^{3}+3 e^{4}+e^{5}-e^{6},2 e^{3}-e^{5}-e^{6}+3 e^{7},- \sqrt{3} e^{6};\\
1357S(\lambda=-3):\quad \sqrt7(2e^1+e^2-e^5+e^6),7e^2+3e^5+5e^6,\sqrt7(e^3+2e^4-\frac{3}{2}e^7),\\3e^3+\frac{7}{2}e^7,
-\sqrt{70}e^6,\sqrt{10}(2e^5+e^6),-2\sqrt{10}e^3.\\
147E1(\lambda=2): \quad
\sqrt3(2  e^{1}+ e^{5}- e^{2}+  e^{6}),3 e^{2}-e^{5}+e^{6},e^{3}+2 e^{4}, \sqrt{3}( e^{3}+ e^{7}), \\
\sqrt{2} (e^{6}-e^5), \sqrt{6} (e^{5}+e^{6}),2  \sqrt{2} (e^{4}- e^{3}).
\end{gather*}
\end{lemma}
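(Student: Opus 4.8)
The plan is to prove the statement in two directions: first to exhibit the three calibrated structures, and then to exclude every other member of the nine families. The first direction is a direct verification. For each of the three coframes $\{f^1,\dots,f^7\}$ displayed in the statement (written in terms of the standard basis $e^1,\dots,e^7$), I would first check that the seven one-forms are linearly independent, so that they genuinely constitute a coframe, and then verify that the associated form
\[
f^{127}+f^{347}+f^{567}+f^{135}-f^{236}-f^{146}-f^{245},
\]
which is of type \eqref{eqn:Gtwoform} by construction, is closed. Concretely, I would expand this $3$-form in the standard basis and compute its Chevalley--Eilenberg differential using the structure equations of $1357N$, $1357S$ and $147E1$ specialized at $\lambda=1$, $\lambda=-3$ and $\lambda=2$; since the $\Gtwo$ type is automatic, only $d\varphi=0$ remains to be checked. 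This step is lengthy but routine.

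For the non-existence direction the key structural remark is that in every one of the nine families the equations are triangular, so $e^7$ never appears on the right-hand side of any $de^i$; hence $e_7$ is central, and moreover the parameter $\lambda$ enters only in $de^7$. Quotienting by $\langle e_7\rangle$ therefore yields, for each family, a \emph{fixed} six-dimensional nilpotent Lie algebra $\lie{h}$ together with a $\lambda$-dependent curvature $\kappa_\lambda=de^7$, placing us in the situation of Proposition~\ref{prop:epi}. I would then classify each quotient $\lie{h}$ into three types. If $\lie{h}$ carries no symplectic form—that is, $\lie{h}$ is one of the eight algebras without a symplectic structure listed before Proposition~\ref{prop:2Lefschetz}—then every closed $2$-form on $\lie{h}$ is degenerate, so $e_7\hook Z^3\subseteq Z^2(\lie{h})$ contains only degenerate forms and the whole family is excluded by Lemma~\ref{lemma:obstr2} with $X=e_7$; this is precisely the remark following that lemma. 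If $\lie{h}$ is symplectic and satisfies the $2$-Lefschetz property, then by Proposition~\ref{prop:2Lefschetz} it is abelian or one of $(0,0,12,13,23,14)$, $(0,0,12,13,23,14+25)$, and the Lefschetz map is an isomorphism for every symplectic $\omega$; Proposition~\ref{prop:epi} then forces $[\kappa_\lambda]=0$, i.e. $\lie{g}\cong\lie{h}\oplus\R$, contradicting the indecomposability of the listed algebras, so again the entire family is excluded for all $\lambda$.

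The remaining, and genuinely delicate, case is when $\lie{h}$ is symplectic but fails $2$-Lefschetz; this is where the three surviving families reside. Writing a closed $3$-form of $\lie{g}$ as $\phi=\alpha+\beta\wedge e^7$ with $\alpha,\beta$ basic, the conditions $d\phi=0$ read $d_{\lie{h}}\beta=0$ and $\beta\wedge\kappa_\lambda=-d_{\lie{h}}\alpha$, so that
\[
e_7\hook Z^3=\{\beta\in Z^2(\lie{h}):[\beta]\cup[\kappa_\lambda]=0\text{ in }H^4(\lie{h}^*)\},
\]
a linear subspace whose dependence on $\lambda$ is explicit. The generic element of this subspace has a cube that is a homogeneous polynomial in its free coefficients with coefficients in $\Z[\lambda]$; Lemma~\ref{lemma:obstr2} excludes exactly those $\lambda$ for which this cube vanishes identically in the coefficients. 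I expect the surviving values to be precisely $\lambda=1,-3,2$ in the three good families and none in any other family of this type, and for the three good values the explicit coframes of the first direction supply the required structure.

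The main obstacle is this last, parameter-dependent step. Two things must be controlled. First, I must show that the degeneracy condition on $e_7\hook Z^3$ cuts out only the isolated values $\lambda=1,-3,2$ and not a continuum; second—since admitting a non-degenerate contraction $\beta$ is necessary but not sufficient for a calibrated $\Gtwo$-structure—I must ensure the converse at those values, which is exactly what the explicit constructions provide. Should a continuum of $\lambda$ survive the contraction test in some family, the fall-back is to tighten Lemma~\ref{lemma:obstr2} by choosing a different central vector $X$, or to impose the full $\SU(3)$ compatibility of the pair $(\omega,\psi^+)$ arising from the decomposition $\varphi=\omega\wedge\eta+\psi^+$, which eliminates the spurious parameter values.
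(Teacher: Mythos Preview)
Your plan is correct and follows essentially the same route as the paper. The paper also verifies the three explicit coframes directly and, for the non-existence direction, uses the central vector $e_7$ to quotient each family to a fixed six-dimensional $\lie{h}$ with $\lambda$-dependent curvature $de^7$, then applies Proposition~\ref{prop:epi}: one writes the generic $[\omega]\in H^2(\lie{h}^*)$, imposes $[de^7\wedge\omega]=0$, and shows (via a polynomial condition in $\lambda$, e.g.\ $\lambda^2-\lambda+1\neq0$ for $1357M$) that $\omega^3=0$ unless $\lambda$ takes the distinguished value. Your formulation via Lemma~\ref{lemma:obstr2} with $X=e_7$ is literally the same computation, since $e_7\hook Z^3=\{\beta\in Z^2(\lie{h}):[\beta\wedge de^7]=0\}$ and, on a nilpotent $\lie{h}$, $\beta^3$ depends only on $[\beta]$; so your ``Case~C'' test and the paper's Proposition~\ref{prop:epi} argument coincide. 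Your stratification into Cases~A, B, C is a tidy organizational refinement the paper does not make explicit (in particular $12457N2$ has $\lie{h}\cong(0,0,12,13,23,14+25)$, which is $2$-Lefschetz, so your Case~B dispatches it without any $\lambda$-computation), but the substance is identical. The paper confirms your expectation that the $e_7$-obstruction already isolates exactly $\lambda=1,-3,2$; your proposed fall-backs are therefore unnecessary.
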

\begin{proof}
It is straightforward to verify that each coframe in the statement determines a calibrated $\Gtwo$-structure on the corresponding Lie algebra.
Conversely, for
each Lie algebra $\lie{g}$ the vector $e_7$ is in the center, and determines an epimorphism on a $6$-dimensional Lie algebra $\lie{h}$; we view $de^7$ as the curvature form on $\lie{h}$, and apply Proposition~\ref{prop:epi}.

In the case of $1357M$, the generic element of $H^2(\lie{h}^*)$ is represented by
\[\omega =   \lambda_6 e^{46}+ \lambda_3 e^{23}+ \lambda_1 e^{13}+ \lambda_5 {(e^{15}+e^{34})}+ \lambda_2 e^{16}
+ \lambda_4 {(e^{15}+e^{26})}.\]
Assume $de^7\wedge\omega$ is exact. Then $\lambda_3,\lambda_6$ are zero, $\lambda_4=-  \lambda_5 \lambda$ and
\[(\lambda-\lambda^2-1)\lambda_5=0.\]
Since $\lambda^2-\lambda+1$ has no real zeroes, $\lambda_4$ and $\lambda_5$ are zero as well, and therefore $\omega^3$ is zero. So there is no symplectic form in the cohomology class of $\omega$.
By Proposition~\ref{prop:epi}, if a calibrated $\Gtwo$-structure existed, then $\lie{g}$ would have to be decomposable, which is absurd.

The other cases are similar.
\end{proof}

We now turn to the rest of the list, where no parameters appear.
\begin{lemma}
\label{lemma:noparameters}
In Gong's list, only six Lie algebras with  no parameters in their definition admit a calibrated $\Gtwo$-structure, which  can be expressed in terms of a coframe as follows:
\begin{align*}
&0,0,12,0,0,13+24,15 && e^{1},e^{2},e^{5},e^{6},e^{3},e^{7},e^{4}\\
&0,0,12,0,0,13,14+25 && e^{1},e^{3},e^{5},e^{7},e^{2},e^{6},e^{4}\\
&0,0,0,12,13,14,15 && e^{1},e^{2},e^{4},e^{7},e^{5},e^{6},e^{3}\\
&0,0,0,12,13,14+23,15 && e^{2}+e^{7},e^{3}+e^{6},e^{7},e^{6},e^{5},e^{4},e^{1}\\
&0,0,12,13,23,15+24,16+34 && e^{2}+e^{4},e^{7},e^{2},e^{5},e^{3},e^{6},e^{1}\\
&0,0,12,13,23,15+24,16+25+34&& \sqrt{3} {(2 e^{2}+e^{5}+e^{7})},2 e^{4}-3 e^{5}-e^{7}, \sqrt{3} {(e^{1}-e^{3}+2 e^{6})},\\ &&&e^{1}+3 e^{3}, \sqrt{6} e^{7}, \sqrt{2} {(2 e^{4}-e^{7})},2 \sqrt{2}  e^{1}\\
\end{align*}
\end{lemma}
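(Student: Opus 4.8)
The plan is to split the argument into an existence part and a non-existence part, exactly parallel to the decomposable case. For the existence part I would simply verify that each of the six coframes displayed in the statement determines a closed $\Gtwo$-form: writing the model $3$-form \eqref{eqn:Gtwoform} in the listed coframe and applying the Chevalley--Eilenberg differential is a finite mechanical check on each of the six Lie algebras. This confirms that these six admit a calibrated $\Gtwo$-structure, and reduces the lemma to proving that none of the other parameter-free entries of Gong's list does.

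For the non-existence part I would treat the remaining $134$ Lie algebras one by one, using $e_7$ (or another convenient central generator) to define an epimorphism $\pi\colon\lie{g}\to\lie{h}$ onto a six-dimensional nilpotent quotient, with $de^7$ in the role of curvature, and then sort them into three sub-cases. If $\lie{h}$ carries no symplectic form, Proposition~\ref{prop:epi} rules out a calibrated $\Gtwo$-structure at once; equivalently, the remark following Lemma~\ref{lemma:obstr2} applies with $X$ a generator of $\ker\pi$. If $\lie{h}$ is symplectic and satisfies the $2$-Lefschetz property---so, by Proposition~\ref{prop:2Lefschetz}, one of $(0,0,0,0,0,0)$, $(0,0,12,13,23,14)$, $(0,0,12,13,23,14+25)$---then the map \eqref{eqn:Lefschetz} is an isomorphism for the symplectic form supplied by the $\Gtwo$-structure; its kernel is trivial, so the curvature class vanishes, and the last assertion of Proposition~\ref{prop:epi} forces $\lie{g}\cong\lie{h}\oplus\R$, contradicting indecomposability. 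Both of these sub-cases are dispatched without any serious computation.

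The remaining sub-case, where $\lie{h}$ is symplectic but \emph{fails} the $2$-Lefschetz property, is where the genuine work lies, since there Proposition~\ref{prop:epi} does not by itself decide the question. Here I would fall back on Lemma~\ref{lemma:obstr2}: for each such $\lie{g}$ I would compute the full space $Z^3$ of closed $3$-forms---a routine kernel computation from the structure equations---and check that, for a suitable central $X$, every element of $X\hook Z^3$ is a degenerate $2$-form on the quotient, so that $(X\hook\phi)^3=0$ for all closed $\phi$ and $\lie{g}$ has no calibrated $\Gtwo$-structure. I expect this to be the main obstacle, for two reasons. First, it is precisely here that the six genuine examples live, so the degeneracy test must succeed on all the non-examples and fail on exactly those six; there is no slack. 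Second, it demands an explicit determination of $Z^3$ and a careful choice of central direction for each surviving algebra---recall that a single favourable $X$ suffices to eliminate a given $\lie{g}$, so part of the effort is selecting it. Organizing the bookkeeping so that each of the $134$ non-examples is closed out by one of the three mechanisms above is the real content of the proof.
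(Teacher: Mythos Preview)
Your proposal is correct and follows essentially the same plan as the paper: verify the six coframes directly, then eliminate the remaining $134$ parameter-free algebras one by one using the two obstructions of Section~\ref{sec:obstructions}, with the case-by-case data recorded in the Appendix tables.

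The one organizational difference worth noting is how Proposition~\ref{prop:epi} gets deployed. You filter the non-examples by whether the six-dimensional quotient is non-symplectic, symplectic $2$-Lefschetz, or symplectic non-$2$-Lefschetz, and reserve Lemma~\ref{lemma:obstr2} for the last bucket only. The paper instead applies Proposition~\ref{prop:epi} in its full, curvature-dependent form (as in the $1357M$ computation in Lemma~\ref{lemma:withparameter}): for many algebras whose quotient is symplectic and \emph{not} $2$-Lefschetz---e.g.\ $(0,0,12,0,13,23,14)$ with quotient isomorphic to $(0,0,0,12,13,23)$---it still checks that no symplectic $\omega$ on $\lie h$ satisfies $[de^7]\wedge[\omega]=0$, and marks these with (P). This lets the paper avoid computing $Z^3$ for a substantial number of algebras that your trichotomy would push into the Lemma~\ref{lemma:obstr2} bucket. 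Your scheme is logically sound, but be aware that your third bucket will be considerably larger than the set of algebras for which the paper actually tabulates closed $3$-forms, and you are implicitly assuming Lemma~\ref{lemma:obstr2} succeeds on all of those extra cases; the paper's wording (``we try to use Proposition~\ref{prop:epi} \dots\ whenever possible'') suggests this is a matter of convenience rather than necessity, but it is not proved.
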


\begin{theorem}
Up to isomorphism, 
there are exactly $12$ nilpotent Lie algebras that admit a calibrated $\Gtwo$-structure, namely those appearing in Theorem~\ref{G2-calibrated-decomposable}, Lemma~\ref{lemma:withparameter} and Lemma~\ref{lemma:noparameters}.
\end{theorem}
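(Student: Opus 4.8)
The plan is to obtain the final count by synthesising the three classification results already proved, and then to confirm that the twelve algebras so obtained are genuinely distinct. The starting observation is that every $7$-dimensional nilpotent Lie algebra is, up to isomorphism, accounted for: the decomposable ones are the $35$ algebras of \cite{Magnin,Salamon}, and the indecomposable ones are exactly those in Gong's list \cite{Gong}, consisting of $140$ algebras with no parameters together with the $9$ one-parameter families displayed above. Hence any nilpotent Lie algebra admitting a calibrated $\Gtwo$-structure must occur in precisely one of these three groups.

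First I would apply the three results in turn. Theorem~\ref{G2-calibrated-decomposable} singles out three decomposable algebras; Lemma~\ref{lemma:withparameter} singles out three members of the parametrized families, namely $1357N$ with $\lambda=1$, $1357S$ with $\lambda=-3$ and $147E1$ with $\lambda=2$; and Lemma~\ref{lemma:noparameters} singles out six of the remaining indecomposable algebras. As these three groups are disjoint portions of the complete classification, their union yields $3+3+6=12$ algebras that admit a calibrated $\Gtwo$-structure, and the negative part of each of the three results guarantees that no further algebra does.

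It then remains to check that these twelve algebras fall into twelve distinct isomorphism classes. The split is immediate across the decomposable--indecomposable divide, so the three decomposable algebras cannot coincide with any of the other nine. The six algebras from Lemma~\ref{lemma:noparameters} are distinct entries of Gong's list, hence pairwise non-isomorphic by construction, and the same holds for the three algebras drawn from the families, which belong to three different families distinguished by discrete invariants (the dimensions of the terms of the lower central series, encoded in the family names). The one point requiring care---and the step I expect to be the main obstacle---is to rule out an accidental isomorphism between a specially chosen family member and an algebra lying outside its family; this I would settle by invoking the fact that each family is a bona fide isomorphism-parametrised family in \cite{Gong}, or, if a direct check is preferred, by comparing a discriminating invariant such as $\dim H^2(\lie{g}^*)$ or the rank of the Lefschetz-type map in \eqref{eqn:Lefschetz}. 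Finally I would note that all twelve algebras have rational structure constants, so that \cite{Malcev} furnishes for each a compact nilmanifold carrying the induced calibrated $\Gtwo$-structure.
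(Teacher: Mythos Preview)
Your approach is essentially the paper's: partition the $7$-dimensional nilpotent Lie algebras into decomposable, indecomposable-with-parameter, and indecomposable-without-parameter, and invoke the three prior results on each piece. The only differences are organizational---in the paper Lemma~\ref{lemma:noparameters} is stated without its own proof, so the case-by-case elimination of the remaining $134$ no-parameter algebras (via the obstructions of Proposition~\ref{prop:epi} and Lemma~\ref{lemma:obstr2}, recorded in the Appendix) is precisely what the proof of this theorem supplies---and your distinctness discussion is superfluous, since Gong's list and the decomposable classification are already up to isomorphism.
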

\begin{proof}
We must show that the remaining Lie algebras in Gong's list satisfy one of the two obstructions of Section~\ref{sec:obstructions}; we do so in the Appendix, where we reproduce Gong's list, and note which obstruction applies to each Lie algebra (as a preference, we try to use Proposition~\ref{prop:epi} rather than Lemma~\ref{lemma:obstr2} whenever possible, because the former does not require computing the space of closed $3$-forms).
\end{proof}

\section*{Appendix}
This appendix contains a list of all indecomposable nilpotent Lie algebras of dimension $7$, taken from \cite{Gong}, except the $9$ one-parameter families that we listed at the beginning of Section~\ref{sec:classification}. Alongside each Lie algebra $\lie{g}$, we give a chosen vector $\xi\in\lie{g}$ which satisfies the conditions of Proposition~\ref{prop:epi} (when marked with a (P)) or Lemma~\ref{lemma:obstr2}, and the structure constants of the quotient $\lie{g}/\Span{\xi}$. The word ``resists'' marks instead the six Lie algebras that resist the obstructions. Below each Lie algebra, we give a basis of its space of closed $3$-forms, except when Proposition~\ref{prop:epi} applies.
\begin{table}
 \caption{\label{table:step2} Step 2 nilpotent Lie algebras of dimension $7$}
\[\begin{array}{|lcr|}
\hline
0,0,0,0,12,23,24 & e_{7}&[0,0,0,0,e^{12},e^{23}]\\
\multicolumn{3}{|c|}{e^{123},e^{124},e^{125},e^{126},e^{127},e^{134},e^{135},e^{136},e^{145},e^{137}+e^{146},e^{147},}\\
\multicolumn{3}{|c|}{e^{234},e^{235},e^{236},e^{237},e^{245},e^{246},e^{247},e^{256},e^{257},e^{267},e^{345}+e^{137},}\\
\multicolumn{3}{|c|}{e^{346},e^{347}}\\
\hline
0,0,0,0,12,23,34 & e_{7}&[0,0,0,0,e^{12},e^{23}]\\
\multicolumn{3}{|c|}{e^{123},e^{124},e^{125},e^{126},e^{134},e^{135},e^{136},e^{137},e^{145},e^{146}-e^{127},e^{147},}\\
\multicolumn{3}{|c|}{e^{234},e^{235},e^{236},e^{237},e^{245},e^{246},e^{247},e^{256},-e^{127}+e^{345},e^{346},e^{347},e^{367}}\\
\hline
0,0,0,0,12+34,23,24 & e_{5}&[0,0,0,0,e^{23},e^{24}]\\
\multicolumn{3}{|c|}{e^{123},e^{124},e^{126},e^{127},e^{134},e^{135},e^{136},e^{137}+e^{125},e^{145},-e^{125}+e^{146},}\\
\multicolumn{3}{|c|}{e^{147},e^{234},e^{235},e^{236},e^{237},e^{245},e^{246},e^{247},e^{267},-e^{125}+e^{345},e^{346},}\\
\multicolumn{3}{|c|}{e^{347},-e^{256}+e^{367},-e^{257}+e^{467}}\\
\hline
0,0,0,0,12+34,13,24 & e_{7}&[0,0,0,0,e^{12}+e^{34},e^{13}]\\
\multicolumn{3}{|c|}{e^{123},e^{124},e^{126},e^{127},e^{134},e^{135},e^{136},e^{137}+e^{125},e^{145},e^{146},e^{147},}\\
\multicolumn{3}{|c|}{e^{234},e^{235},e^{236},e^{237},e^{245},e^{246}+e^{125},e^{247},-e^{125}+e^{345},e^{346},e^{347}}\\
\hline
0,0,0,0,0,12,14+35 & e_{6}(P)&[0,0,0,0,0,e^{14}+e^{35}]\\
\hline
0,0,0,0,0,12+34,15+23 & e_{7}(P)&[0,0,0,0,0,e^{34}+e^{12}]\\
\hline
0,0,0,0,0,0,12+34+56 & e_{7}&[0,0,0,0,0,0]\\
\multicolumn{3}{|c|}{e^{123},e^{124},e^{125},e^{126},e^{134},e^{135},e^{136},e^{145},e^{146},e^{156},e^{234},e^{235},}\\
\multicolumn{3}{|c|}{e^{236},e^{245},e^{246},e^{256},e^{345},e^{346},e^{356},e^{456}}\\
\hline
0,0,0,0,12-34,13+24,14 & e_{6}&[0,0,0,0,e^{12}-e^{34},e^{14}]\\
\multicolumn{3}{|c|}{e^{123},e^{124},e^{126},e^{127},e^{134},e^{135},e^{136}-e^{125},e^{137},e^{145},e^{146},e^{147},}\\
\multicolumn{3}{|c|}{e^{234},e^{235},e^{236},e^{125}+e^{237},e^{245},-e^{125}+e^{246},e^{247},e^{125}+e^{345},e^{346},}\\
\multicolumn{3}{|c|}{e^{347},e^{457}-e^{167},e^{467}+e^{157}}\\
\hline
0,0,0,0,12-34,13+24,14-23 & e_{7}&[0,0,0,0,e^{12}-e^{34},e^{24}+e^{13}]\\
\multicolumn{3}{|c|}{e^{123},e^{124},e^{126},e^{127},e^{134},e^{135},e^{136}-e^{125},e^{137},e^{145},e^{146},-e^{125}+e^{147},}\\
\multicolumn{3}{|c|}{e^{234},e^{235},e^{236},e^{125}+e^{237},e^{245},-e^{125}+e^{246},e^{247},e^{125}+e^{345},e^{346},e^{347}}\\
\hline
\end{array}\]
\end{table}

\begin{table}
 \caption{\label{table:step3} Step 3 nilpotent Lie algebras of dimension $7$}
\[\begin{array}{|lcr|}
\hline
0,0,12,0,13,24,14 & e_{5}&[0,0,e^{12},0,e^{24},e^{14}]\\
\multicolumn{3}{|c|}{e^{123},e^{124},e^{125},e^{126},e^{127},e^{134},e^{135},e^{137},e^{145},e^{146},e^{147},e^{157},}\\
\multicolumn{3}{|c|}{e^{234},e^{235},e^{236},e^{237}+e^{136},e^{245}-e^{136},e^{246},e^{247},e^{345}-e^{156},e^{346}+e^{267},}\\
\multicolumn{3}{|c|}{e^{347}+e^{167},e^{467}}\\
\hline
0,0,12,0,13,23,14 & e_{7}(P)&[0,0,e^{12},0,e^{13},e^{23}]\\
\hline
0,0,12,0,13+24,23,14 & e_{6}&[0,0,e^{12},0,e^{24}+e^{13},e^{14}]\\
\multicolumn{3}{|c|}{e^{123},e^{124},e^{125},e^{126},e^{127},e^{134},e^{136},e^{137},e^{145},e^{146}+e^{135},e^{147},}\\
\multicolumn{3}{|c|}{e^{234},e^{235},e^{236},e^{237}+e^{135},-e^{135}+e^{245},e^{246},e^{247},e^{257}+e^{345}+e^{167},}\\
\multicolumn{3}{|c|}{e^{346}+e^{267},e^{347}+e^{157}}\\
\hline
0,0,12,0,0,13+24,15& \text{ resists }&\\
\multicolumn{3}{|c|}{e^{123},e^{124},e^{125},e^{126},e^{127},e^{134},e^{135},e^{137},e^{145},e^{146},e^{147},e^{157},}\\
\multicolumn{3}{|c|}{e^{234},e^{235},e^{236},e^{245},-e^{136}+e^{246},-e^{156}+e^{247},e^{256}+e^{237},e^{257},e^{345}-e^{156},}\\
\multicolumn{3}{|c|}{-e^{347}-e^{167}+e^{456},e^{457}}\\
\hline
0,0,12,0,0,13,14+25& \text{ resists }&\\
\multicolumn{3}{|c|}{e^{123},e^{124},e^{125},e^{126},e^{127},e^{134},e^{135},e^{136},e^{145},e^{146},e^{156},e^{157},}\\
\multicolumn{3}{|c|}{e^{234},e^{235},e^{236},e^{245},e^{237}+e^{246},e^{247},e^{256}-e^{137},e^{257}-e^{147},e^{345}+e^{147},}\\
\multicolumn{3}{|c|}{-e^{167}+e^{356},e^{457}}\\
\hline
0,0,12,0,0,13+24,25 & e_{7}&[0,0,e^{12},0,0,e^{13}+e^{24}]\\
\multicolumn{3}{|c|}{e^{123},e^{124},e^{125},e^{126},e^{127},e^{134},e^{135},e^{145},e^{146},e^{147}+e^{156},e^{157},}\\
\multicolumn{3}{|c|}{e^{234},e^{235},e^{236},e^{237},e^{245},-e^{136}+e^{246},e^{247},e^{256}-e^{137},e^{257},e^{345}+e^{147},e^{457}}\\
\hline
0,0,12,0,0,13+24,14+25 & e_{7}&[0,0,e^{12},0,0,e^{13}+e^{24}]\\
\multicolumn{3}{|c|}{e^{123},e^{124},e^{125},e^{126},e^{127},e^{134},e^{135},e^{145},e^{146},e^{156}+e^{147},e^{157},}\\
\multicolumn{3}{|c|}{e^{234},e^{235},e^{236},e^{237}+e^{136},e^{245},e^{246}-e^{136},e^{247},e^{256}-e^{137},e^{257}-e^{147},}\\
\multicolumn{3}{|c|}{e^{345}+e^{147},e^{457}}\\
\hline
0,0,12,0,0,13+45,24 & e_{7}(P)&[0,0,e^{12},0,0,e^{13}+e^{45}]\\
\hline
\hline
0,0,12,0,0,13+45,15+24 & e_{7}(P)&[0,0,e^{12},0,0,e^{13}+e^{45}]\\
\hline
0,0,12,0,0,13+24,45 & e_{7}&[0,0,e^{12},0,0,e^{13}+e^{24}]\\
\multicolumn{3}{|c|}{e^{123},e^{124},e^{125},e^{126},e^{134},e^{135},e^{145},e^{146},e^{147},-e^{127}+e^{156},e^{157},}\\
\multicolumn{3}{|c|}{e^{234},e^{235},e^{236},e^{245},e^{246}-e^{136},e^{247},e^{257},e^{345}-e^{127},e^{456}-e^{137},e^{457}}\\
\hline
0,0,12,0,0,13+14,15+23 & e_{7}&[0,0,e^{12},0,0,e^{14}+e^{13}]\\
\multicolumn{3}{|c|}{e^{123},e^{124},e^{125},e^{126},e^{127},e^{134},e^{135},e^{136},e^{137},e^{145},e^{146},e^{156},}\\
\multicolumn{3}{|c|}{e^{234},e^{235},-e^{147}+e^{236},e^{237}-e^{157},e^{245},e^{147}+e^{246},e^{247}+e^{256}+e^{157},}\\
\multicolumn{3}{|c|}{e^{257},-e^{247}+e^{345}}\\
\hline
0,0,12,0,0,13+24,15+23 & e_{7}&[0,0,e^{12},0,0,e^{24}+e^{13}]\\
\multicolumn{3}{|c|}{e^{123},e^{124},e^{125},e^{126},e^{127},e^{134},e^{135},e^{137},e^{145},e^{146},e^{147}+e^{136},}\\
\multicolumn{3}{|c|}{e^{234},e^{235},e^{236},-e^{157}+e^{237},e^{245},e^{246}-e^{136},-e^{156}+e^{247},e^{157}+e^{256},}\\
\multicolumn{3}{|c|}{e^{257},-e^{156}+e^{345}}\\
\hline
0,0,12,0,0,13,23+45 & e_{6}(P)&[0,0,e^{12},0,0,e^{23}+e^{45}]\\
\hline
0,0,12,0,0,13+24,23+45 & e_{6}(P)&[0,0,e^{12},0,0,e^{23}+e^{45}]\\
\hline
\end{array}\]\end{table}
\begin{table}
 \[\begin{array}{|lcr|}
\hline
0,0,0,12,13,14,15& \text{ resists }&\\
\multicolumn{3}{|c|}{e^{123},e^{124},e^{125},e^{126},e^{127},e^{134},e^{135},e^{136},e^{137},e^{145},e^{146},e^{147},}\\
\multicolumn{3}{|c|}{e^{156},e^{157},e^{167},e^{234},e^{235},e^{245}+e^{236},e^{246},e^{345}+e^{237},e^{346}+e^{256}+e^{247},}\\
\multicolumn{3}{|c|}{e^{257}+e^{347}+e^{356},e^{357}}\\
\hline
0,0,0,12,13,14,35 & e_{7}&[0,0,0,e^{12},e^{13},e^{14}]\\
\multicolumn{3}{|c|}{e^{123},e^{124},e^{125},e^{126},e^{134},e^{135},e^{136},e^{137},e^{145},e^{146},e^{156},e^{157},}\\
\multicolumn{3}{|c|}{e^{234},e^{235},e^{237},e^{245}+e^{236},e^{246},e^{345}+e^{127},e^{347}+e^{257},-e^{147}+e^{356},e^{357}}\\
\hline
0,0,0,12,13,14+35,15 & e_{7}(P)&[0,0,0,e^{12},e^{13},e^{35}+e^{14}]\\
\hline
0,0,0,12,13,14,25+34 & e_{6}(P)&[0,0,0,e^{12},e^{13},e^{34}+e^{25}]\\
\hline
0,0,0,12,13,14+15,25+34 & e_{6}(P)&[0,0,0,e^{12},e^{13},e^{25}+e^{34}]\\
\hline
0,0,0,12,13,24+35,25+34 & e_{7}(P)&[0,0,0,e^{12},e^{13},e^{24}+e^{35}]\\
\hline
0,0,0,12,13,14+15+24+35,25+34 & e_{7}(P)&[0,0,0,e^{12},e^{13},e^{35}+e^{24}+e^{15}+e^{14}]\\
\hline
0,0,0,12,13,14+24+35,25+34 & e_{7}(P)&[0,0,0,e^{12},e^{13},e^{24}+e^{35}+e^{14}]\\
\hline
0,0,0,12,13,25+34,35 & e_{7}(P)&[0,0,0,e^{12},e^{13},e^{25}+e^{34}]\\
\hline
0,0,0,12,13,15+35,25+34 & e_{6}(P)&[0,0,0,e^{12},e^{13},e^{34}+e^{25}]\\
\hline
0,0,0,12,13,14+35,25+34 & e_{7}(P)&[0,0,0,e^{12},e^{13},e^{14}+e^{35}]\\
\hline
0,0,0,12,13,14+23,15& \text{ resists }&\\
\multicolumn{3}{|c|}{e^{123},e^{124},e^{125},e^{126},e^{127},e^{134},e^{135},e^{136},e^{137},e^{145},e^{147},e^{157},}\\
\multicolumn{3}{|c|}{e^{234},e^{235},-e^{146}+e^{236},-e^{156}+e^{237},e^{245}+e^{146},e^{246},e^{167}+e^{257},e^{156}+e^{345},}\\
\multicolumn{3}{|c|}{e^{346}+e^{256}+e^{247},-e^{167}+e^{356}+e^{347},e^{357}}\\
\hline
0,0,0,12,13,14+23,35 & e_{7}&[0,0,0,e^{12},e^{13},e^{23}+e^{14}]\\
\multicolumn{3}{|c|}{e^{123},e^{124},e^{125},e^{126},e^{134},e^{135},e^{136},e^{137},e^{145},-e^{127}+e^{156},e^{157},}\\
\multicolumn{3}{|c|}{e^{234},e^{235},-e^{146}+e^{236},e^{237},e^{146}+e^{245},e^{246},e^{127}+e^{345},e^{257}+e^{347},}\\
\multicolumn{3}{|c|}{-e^{147}+e^{356},e^{357}}\\
\hline
0,0,0,12,13,15+24,23 & e_{7}(P)&[0,0,0,e^{12},e^{13},e^{24}+e^{15}]\\
\hline
0,0,0,12,13,14+35,15+23 & e_{7}(P)&[0,0,0,e^{12},e^{13},e^{35}+e^{14}]\\
\hline
0,0,0,12,13,23,25+34 & e_{6}(P)&[0,0,0,e^{12},e^{13},e^{25}+e^{34}]\\
\hline
0,0,0,12,13,14+23,25+34 & e_{6}(P)&[0,0,0,e^{12},e^{13},e^{25}+e^{34}]\\
\hline
0,0,0,12,13,14+15+23,25+34 & e_{6}(P)&[0,0,0,e^{12},e^{13},e^{25}+e^{34}]\\
\hline
0,0,12,0,0,0,13+24+56 & e_{7}&[0,0,e^{12},0,0,0]\\
\multicolumn{3}{|c|}{e^{123},e^{124},e^{125},e^{126},e^{134},e^{135},e^{136},e^{145},e^{146},e^{156},e^{234},e^{235},}\\
\multicolumn{3}{|c|}{e^{236},e^{245},e^{246},e^{256},-e^{157}+e^{345},-e^{167}+e^{346},e^{356}-e^{127},e^{456}}\\
\hline
0,0,0,12,13,0,16+25+34 & e_{7}&[0,0,0,e^{12},e^{13},0]\\
\multicolumn{3}{|c|}{e^{123},e^{124},e^{125},e^{126},e^{134},e^{135},e^{136},e^{145},e^{146},e^{156},e^{234},e^{235},}\\
\multicolumn{3}{|c|}{e^{236},e^{245}+e^{127},e^{246},e^{256}-e^{237},e^{345}-e^{137},e^{346}+e^{237},e^{356}}\\
\hline
0,0,0,12,13,0,14+26+35 & e_{7}&[0,0,0,e^{12},e^{13},0]\\
\multicolumn{3}{|c|}{e^{123},e^{124},e^{125},e^{126},e^{134},e^{135},e^{136},e^{145},e^{146},e^{156},e^{234},e^{235},}\\
\multicolumn{3}{|c|}{e^{236},e^{237}+e^{245},e^{246},e^{137}+e^{256},e^{127}+e^{345},-e^{137}+e^{346},e^{356},e^{157}+e^{456}-e^{367}}\\
\hline
0,0,0,12,23,-13,15+26+16-2*34 & e_{7}&[0,0,0,e^{12},e^{23},- e^{13}]\\
\multicolumn{3}{|c|}{e^{123},e^{124},e^{125},e^{126},e^{134},e^{135},e^{136},\frac{1}{2} e^{127}+e^{145},e^{146},}\\
\multicolumn{3}{|c|}{e^{156}-e^{137},e^{234},e^{235},e^{236},e^{245},e^{246}+\frac{1}{2} e^{127},e^{256}-e^{137}-e^{237},}\\
\multicolumn{3}{|c|}{e^{345}+e^{137}+e^{237},e^{346}-e^{137},e^{356}}\\
\hline
0,0,0,0,12,34,15+36 & e_{7}&[0,0,0,0,e^{12},e^{34}]\\
\multicolumn{3}{|c|}{e^{123},e^{124},e^{125},e^{134},e^{135},e^{136},e^{137},e^{145},e^{146},e^{234},e^{235},e^{236},}\\
\multicolumn{3}{|c|}{e^{245},e^{246},e^{345}-e^{126},e^{346},e^{347}-e^{156},e^{356}+e^{127},-e^{157}+e^{367}}\\
\hline
\end{array}\]\end{table}
\begin{table}
\[\begin{array}{|lcr|}
\hline
0,0,0,0,12,34,15+24+36 & e_{7}&[0,0,0,0,e^{12},e^{34}]\\
\multicolumn{3}{|c|}{e^{123},e^{124},e^{125},e^{134},e^{135},e^{136},e^{137}+e^{126},e^{145},e^{146},e^{234},e^{235},}\\
\multicolumn{3}{|c|}{e^{236},e^{245},e^{246},e^{345}-e^{126},e^{346},e^{347}-e^{156},e^{356}+e^{127}}\\
\hline
0,0,0,0,12,14+23,16-35 & e_{7}&[0,0,0,0,e^{12},e^{14}+e^{23}]\\
\multicolumn{3}{|c|}{e^{123},e^{124},e^{125},e^{126},e^{134},e^{135},e^{136},e^{137},e^{145},e^{156}+e^{127},e^{157},}\\
\multicolumn{3}{|c|}{e^{234},e^{235},-e^{146}+e^{236},e^{245},e^{246},-e^{146}+e^{345},e^{346},e^{147}+e^{237}+e^{356}}\\
\hline
0,0,0,0,12,14+23,16+24-35 & e_{7}&[0,0,0,0,e^{12},e^{23}+e^{14}]\\
\multicolumn{3}{|c|}{e^{123},e^{124},e^{125},e^{126},e^{134},e^{135},e^{136},e^{145},e^{137}+e^{146},e^{156}+e^{127},}\\
\multicolumn{3}{|c|}{e^{234},e^{235},e^{236}+e^{137},e^{245},e^{246},e^{157}+e^{256},e^{345}+e^{137},e^{346},e^{356}+e^{237}+e^{147}}\\
\hline
0,0,12,0,0,13+14+25,15+23 & e_{7}&[0,0,e^{12},0,0,e^{25}+e^{14}+e^{13}]\\
\multicolumn{3}{|c|}{e^{123},e^{124},e^{125},e^{126},e^{127},e^{134},e^{135},e^{137},e^{145},e^{156},e^{157}+e^{136},}\\
\multicolumn{3}{|c|}{e^{234},e^{235},-e^{147}+e^{236},e^{136}+e^{237},e^{245},e^{147}+e^{246},e^{146}+e^{247},e^{256}-e^{146}-e^{136},}\\
\multicolumn{3}{|c|}{e^{257},e^{345}+e^{146}}\\
\hline
0,0,0,12,13,14,24+35 & e_{6}(P)&[0,0,0,e^{12},e^{13},e^{24}+e^{35}]\\
\hline
0,0,0,12,13,24-35,25+34 & e_{7}(P)&[0,0,0,e^{12},e^{13},-e^{35}+e^{24}]\\
\hline
0,0,0,12,13,14+24-35,25+34 & e_{7}(P)&[0,0,0,e^{12},e^{13},e^{24}-e^{35}+e^{14}]\\
\hline
0,0,0,12,13,23,24+35 & e_{6}(P)&[0,0,0,e^{12},e^{13},e^{35}+e^{24}]\\
\hline
0,0,0,12,13,14+23,24+35 & e_{6}(P)&[0,0,0,e^{12},e^{13},e^{24}+e^{35}]\\
\hline
0,0,0,12,13,0,16+24+35 & e_{7}&[0,0,0,e^{12},e^{13},0]\\
\multicolumn{3}{|c|}{e^{123},e^{124},e^{125},e^{126},e^{134},e^{135},e^{136},e^{145},e^{146},e^{156},e^{234},e^{235},}\\
\multicolumn{3}{|c|}{e^{236},e^{245}-e^{137},e^{246},e^{256}-e^{237},e^{127}+e^{345},e^{237}+e^{346},e^{356}}\\
\hline
0,0,0,0,13+24,14-23,15+26 & e_{7}&[0,0,0,0,e^{13}+e^{24},e^{14}-e^{23}]\\
\multicolumn{3}{|c|}{e^{123},e^{124},e^{125},e^{126},e^{127},e^{134},e^{136},e^{145},-e^{135}+e^{146},e^{234},e^{235},}\\
\multicolumn{3}{|c|}{e^{135}+e^{236},e^{237}-e^{147}+e^{156},-e^{135}+e^{245},e^{246},e^{137}+e^{247}+e^{256},e^{257}-e^{167},}\\
\multicolumn{3}{|c|}{e^{345},e^{346}}\\
\hline
0,0,0,0,13+24,14-23,15+26+24 & e_{7}&[0,0,0,0,e^{24}+e^{13},e^{14}-e^{23}]\\
\multicolumn{3}{|c|}{e^{123},e^{124},e^{125},e^{126},e^{127},e^{134},e^{136},e^{145},e^{146}-e^{135},e^{234},e^{235},}\\
\multicolumn{3}{|c|}{e^{135}+e^{236},e^{156}-e^{147}+e^{237},-e^{135}+e^{245},e^{246},e^{256}+e^{137}-e^{135}+e^{247},}\\
\multicolumn{3}{|c|}{e^{257}-e^{167}-e^{147},e^{345},e^{346}}\\
\hline
\end{array}\]
\end{table}

\begin{table}
  \caption{\label{table:step4} Step 4 nilpotent Lie algebras of dimension $7$}
\[\begin{array}{|lcr|}
\hline
0,0,12,13,0,14,15 & e_{7}&[0,0,e^{12},e^{13},0,e^{14}]\\
\multicolumn{3}{|c|}{e^{123},e^{124},e^{125},e^{126},e^{127},e^{134},e^{135},e^{136},e^{137},e^{145},e^{146},e^{147},}\\
\multicolumn{3}{|c|}{e^{156},e^{157},e^{167},e^{234},e^{235},-e^{237}+e^{245},e^{257},e^{256}+e^{345}}\\
\hline
0,0,12,13,0,25,14 & e_{7}&[0,0,e^{12},e^{13},0,e^{25}]\\
\multicolumn{3}{|c|}{e^{123},e^{124},e^{125},e^{126},e^{127},e^{134},e^{135},e^{137},e^{145},e^{147},e^{156},e^{157},}\\
\multicolumn{3}{|c|}{e^{234},e^{235},e^{236},e^{136}+e^{245},e^{256},-e^{146}+e^{257},e^{146}+e^{345}}\\
\hline
0,0,12,13,0,14+25,15 & e_{7}&[0,0,e^{12},e^{13},0,e^{25}+e^{14}]\\
\multicolumn{3}{|c|}{e^{123},e^{124},e^{125},e^{126},e^{127},e^{134},e^{135},e^{137},e^{145},e^{147},e^{156},e^{157},}\\
\multicolumn{3}{|c|}{e^{234},e^{235},e^{136}+e^{237},e^{136}+e^{245},e^{256}-e^{146},e^{257},e^{345}+e^{146},e^{357}+e^{167}}\\
\hline
\end{array}\]
\end{table}

\begin{table}
\[\begin{array}{|lcr|}
\hline
0,0,12,13,0,14+23+25,15 & e_{6}&[0,0,e^{12},e^{13},0,e^{15}]\\
\multicolumn{3}{|c|}{e^{123},e^{124},e^{125},e^{126},e^{127},e^{134},e^{135},e^{137},e^{145},}\\
\multicolumn{3}{|c|}{e^{147},e^{136}+e^{156},e^{157},e^{234},e^{235},e^{237}+e^{136},e^{245}+e^{136},}\\
\multicolumn{3}{|c|}{e^{256}+e^{236}-e^{146},e^{257},e^{345}-e^{236}+e^{146},e^{247}-e^{236}+e^{357}+e^{167}+e^{146}}\\
\hline
0,0,12,13,0,23+25,14 & e_{7}&[0,0,e^{12},e^{13},0,e^{23}+e^{25}]\\
\multicolumn{3}{|c|}{e^{123},e^{124},e^{125},e^{126},e^{127},e^{134},e^{135},e^{137},e^{145},e^{147},e^{156}+e^{136},}\\
\multicolumn{3}{|c|}{e^{157},e^{234},e^{235},e^{236},e^{136}+e^{245},e^{256},e^{257}-e^{146}+e^{237},e^{345}+e^{146}-e^{237}}\\
\hline
0,0,12,13,0,14+23,15 & e_{7}&[0,0,e^{12},e^{13},0,e^{23}+e^{14}]\\
\multicolumn{3}{|c|}{e^{123},e^{124},e^{125},e^{126},e^{127},e^{134},e^{135},e^{136},e^{137},e^{145},e^{147},e^{157},e^{234},e^{235},}\\
\multicolumn{3}{|c|}{-e^{146}+e^{236},-e^{156}+e^{237},e^{245}-e^{156},e^{247}+e^{256}+e^{167},e^{257},e^{345}-e^{247}-e^{167}}\\
\hline
0,0,12,13,0,15+23,14 & e_{7}&[0,0,e^{12},e^{13},0,e^{15}+e^{23}]\\
\multicolumn{3}{|c|}{e^{123},e^{124},e^{125},e^{126},e^{127},e^{134},e^{135},e^{136},e^{137},e^{145},e^{147},e^{157},}\\
\multicolumn{3}{|c|}{e^{234},e^{235},-e^{156}+e^{236},-e^{146}+e^{237},-e^{156}+e^{245},e^{167}+e^{247},e^{256},e^{345}+e^{257}}\\
\hline
0,0,12,13,0,23,14+25 & e_{7}(P)&[0,0,e^{12},e^{13},0,e^{23}]\\
\hline
0,0,12,13,0,14+23,25 & e_{7}&[0,0,e^{12},e^{13},0,e^{14}+e^{23}]\\
\multicolumn{3}{|c|}{e^{123},e^{124},e^{125},e^{126},e^{127},e^{134},e^{135},e^{136},e^{145},e^{137}+e^{156},e^{157},}\\
\multicolumn{3}{|c|}{e^{234},e^{235},-e^{146}+e^{236},e^{237},e^{137}+e^{245},-e^{147}+e^{256},e^{257},e^{147}+e^{345}}\\
\hline
0,0,12,13,0,14+23,23+25 & e_{7}&[0,0,e^{12},e^{13},0,e^{23}+e^{14}]\\
\multicolumn{3}{|c|}{e^{123},e^{124},e^{125},e^{126},e^{127},e^{134},e^{135},e^{136},e^{145},e^{156}+e^{137},e^{157}+e^{137},}\\
\multicolumn{3}{|c|}{e^{234},e^{235},-e^{146}+e^{236},e^{237},e^{137}+e^{245},e^{146}-e^{147}+e^{256},e^{257},-e^{146}+e^{147}+e^{345}}\\
\hline
0,0,12,13,0,15+23,14+25 & e_{7}&[0,0,e^{12},e^{13},0,e^{15}+e^{23}]\\
\multicolumn{3}{|c|}{e^{123},e^{124},e^{125},e^{126},e^{127},e^{134},e^{135},e^{136},e^{145},e^{156}+e^{137},e^{157},e^{234},e^{235},e^{236}+e^{137},}\\
\multicolumn{3}{|c|}{-e^{146}+e^{237},e^{245}+e^{137},e^{256},-e^{147}+e^{257},e^{345}+e^{147},e^{356}-e^{167}-e^{247}}\\
\hline
0,0,12,13,23,14+25,15+24 & e_{7}(P)&[0,0,e^{12},e^{13},e^{23},e^{14}+e^{25}]\\
\hline
0,0,12,13,23,24+15,14 & e_{6}(P)&[0,0,e^{12},e^{13},e^{23},e^{14}]\\
\hline
0,0,0,12,14+23,23,15-34 & e_{7}&[0,0,0,e^{12},e^{23}+e^{14},e^{23}]\\
\multicolumn{3}{|c|}{e^{123},e^{124},e^{125},e^{126},e^{134},e^{135},e^{136},e^{137},e^{145}+e^{127},e^{146}+e^{127},e^{147},e^{234},}\\
\multicolumn{3}{|c|}{e^{235}+e^{127},e^{236},e^{245},e^{246},e^{345}+e^{237},-e^{156}+e^{346}+e^{237},e^{167}-\frac{1}{2} e^{157}-\frac{1}{2} e^{347}+e^{356}}\\
\hline
0,0,0,12,14+23,13,15-34 & e_{7}&[0,0,0,e^{12},e^{23}+e^{14},e^{13}]\\
\multicolumn{3}{|c|}{e^{123},e^{124},e^{125},e^{126},e^{134},e^{135},e^{136},e^{137},e^{127}+e^{145},e^{146},e^{147},}\\
\multicolumn{3}{|c|}{e^{234},e^{127}+e^{235},e^{236},e^{245},-e^{127}+e^{246},e^{237}+e^{345},-e^{156}+e^{346},e^{167}+e^{356}}\\
\hline
0,0,0,12,14+23,24,15-34 & e_{7}&[0,0,0,e^{12},e^{14}+e^{23},e^{24}]\\
\multicolumn{3}{|c|}{e^{123},e^{124},e^{125},e^{126},e^{134},e^{135},e^{136}-e^{127},e^{137},e^{145}+e^{127},e^{146},}\\
\multicolumn{3}{|c|}{e^{147},e^{234},e^{235}+e^{127},e^{236},e^{245},e^{246},e^{345}+e^{237},e^{247}+e^{346}-e^{156}}\\
\hline
0,0,0,12,14+23,13+24,15-34 & e_{7}&[0,0,0,e^{12},e^{23}+e^{14},e^{24}+e^{13}]\\
\multicolumn{3}{|c|}{e^{123},e^{124},e^{125},e^{126},e^{134},e^{135},e^{136}-e^{127},e^{137},e^{145}+e^{127},e^{146},}\\
\multicolumn{3}{|c|}{e^{147},e^{234},e^{127}+e^{235},e^{236},e^{245},e^{246}-e^{127},e^{345}+e^{237},e^{346}-e^{156}+e^{247}}\\
\hline
0,0,12,13,0,0,14+56 & e_{7}&[0,0,e^{12},e^{13},0,0]\\
\multicolumn{3}{|c|}{e^{123},e^{124},e^{125},e^{126},e^{134},e^{135},e^{136},e^{145},e^{146},e^{156},e^{157},e^{167},}\\
\multicolumn{3}{|c|}{e^{234},e^{235},e^{236},e^{256},e^{257}+e^{345},e^{346}+e^{267},-e^{127}+e^{356},e^{456}-e^{137},e^{567}-e^{147}}\\
\hline
0,0,12,13,0,0,23+14+56 & e_{7}&[0,0,e^{12},e^{13},0,0]\\
\multicolumn{3}{|c|}{e^{123},e^{124},e^{125},e^{126},e^{134},e^{135},e^{136},e^{145},e^{146},e^{156},e^{234},e^{235},e^{236},-e^{157}+e^{245},}\\
\multicolumn{3}{|c|}{-e^{167}+e^{246},e^{256},e^{257}+e^{345},e^{346}+e^{267},-e^{127}+e^{356},-e^{137}+e^{456}}\\
\hline
0,0,0,12,14+23,0,15+26-34 & e_{7}&[0,0,0,e^{12},e^{23}+e^{14},0]\\
\multicolumn{3}{|c|}{e^{123},e^{124},e^{125},e^{126},e^{134},e^{135},e^{136},e^{145}+e^{127},e^{146},-e^{137}+e^{156},}\\
\multicolumn{3}{|c|}{e^{234},e^{235}+e^{127},e^{236},e^{245},e^{246},e^{256}+e^{147},e^{345}+e^{237},e^{346}-e^{137},e^{356}+e^{167}}\\
\hline
\end{array}\]
\end{table}

\begin{table}
\[\begin{array}{|lcr|}
\hline
0,0,0,12,14+23,0,15+36-34 & e_{7}&[0,0,0,e^{12},e^{23}+e^{14},0]\\
\multicolumn{3}{|c|}{e^{123},e^{124},e^{125},e^{126},e^{134},e^{135},e^{136},e^{137},e^{146},e^{127}+e^{156}+e^{145},-e^{147}+e^{167},}\\
\multicolumn{3}{|c|}{e^{234},e^{235}-e^{145},e^{236},e^{245},e^{246},e^{237}+e^{345},e^{346}+e^{127}+e^{145},e^{147}+e^{356}}\\
\hline
0,0,0,12,14+23,0,15+24+36-34 & e_{7}&[0,0,0,e^{12},e^{14}+e^{23},0]\\
\multicolumn{3}{|c|}{e^{123},e^{124},e^{125},e^{126},e^{134},e^{135},e^{136},e^{145}+e^{137},e^{146},e^{156}+e^{127}-e^{137},}\\
\multicolumn{3}{|c|}{e^{234},e^{235}+e^{137},e^{236},e^{245},e^{246},-e^{167}+e^{147}+e^{256},}\\
\multicolumn{3}{|c|}{e^{345}+e^{237},e^{346}+e^{127}-e^{137},e^{147}+e^{356}}\\
\hline
0,0,12,0,23,24,16+25+34 & e_{7}&[0,0,e^{12},0,e^{23},e^{24}]\\
\multicolumn{3}{|c|}{e^{123},e^{124},e^{125},e^{126},e^{134},e^{135},e^{136}+e^{127},e^{145}-e^{127},e^{146},e^{234},e^{235},e^{236},}\\
\multicolumn{3}{|c|}{e^{147}+e^{237}+e^{156},e^{245},e^{246},e^{256},e^{147}+e^{345},-e^{247}+e^{346},e^{456}+e^{267}}\\
\hline
0,0,12,0,23,24,25+46 & e_{7}&[0,0,e^{12},0,e^{23},e^{24}]\\
\multicolumn{3}{|c|}{e^{123},e^{124},e^{125},e^{126},e^{134},e^{135},e^{145}+e^{136},e^{146},e^{234},e^{235},e^{236},}\\
\multicolumn{3}{|c|}{e^{245},e^{246},e^{247},e^{256},e^{267},e^{147}+e^{345},-e^{127}+e^{346},e^{237}+e^{456},e^{467}-e^{257}}\\
\hline
0,0,12,0,23,24,13+25-46 & e_{7}&[0,0,e^{12},0,e^{23},e^{24}]\\
\multicolumn{3}{|c|}{e^{123},e^{124},e^{125},e^{126},e^{134},e^{135},e^{136}+e^{145},e^{146},e^{234},e^{235},e^{236},e^{245},}\\
\multicolumn{3}{|c|}{e^{246},e^{247}-e^{136},e^{256},-e^{147}+e^{267}-e^{156},e^{147}+e^{345},e^{127}+e^{346},-e^{237}+e^{456}}\\
\hline
0,0,12,0,23,14,16+25 & e_{7}&[0,0,e^{12},0,e^{23},e^{14}]\\
\multicolumn{3}{|c|}{e^{123},e^{124},e^{125},e^{126},e^{127},e^{134},e^{135},e^{136},e^{146},e^{234},e^{235},e^{236}-e^{145},}\\
\multicolumn{3}{|c|}{e^{156}+e^{237},e^{245},e^{246},-e^{147}+e^{256},-e^{167}+e^{257},e^{147}+e^{345},e^{346}-e^{247}}\\
\hline
0,0,12,0,23,14,16+25+26-34 & e_{7}&[0,0,e^{12},0,e^{23},e^{14}]\\
\multicolumn{3}{|c|}{e^{123},e^{124},e^{125},e^{126},e^{134},e^{135},e^{136},e^{127}+e^{145},e^{146},e^{234},e^{235},e^{127}+e^{236},}\\
\multicolumn{3}{|c|}{e^{156}+e^{237},e^{245},e^{246},e^{256}-e^{247}-e^{147},e^{345}+e^{247}+e^{147},e^{346}-e^{247}}\\
\hline
0,0,12,0,23,14,25+46 & e_{7}&[0,0,e^{12},0,e^{23},e^{14}]\\
\multicolumn{3}{|c|}{e^{123},e^{124},e^{125},e^{126},e^{134},e^{135},e^{136},e^{146},e^{234},e^{235},e^{236}-e^{145},}\\
\multicolumn{3}{|c|}{e^{245},e^{246},e^{247},e^{256}-e^{147},e^{345}+e^{147},e^{346}-e^{127},e^{237}+e^{456},-e^{257}+e^{467}}\\
\hline
0,0,12,0,23,14,13+25+46 & e_{7}&[0,0,e^{12},0,e^{23},e^{14}]\\
\multicolumn{3}{|c|}{e^{123},e^{124},e^{125},e^{126},e^{134},e^{135},e^{136},e^{146},e^{234},e^{235},e^{236}-e^{145},}\\
\multicolumn{3}{|c|}{e^{245},e^{246},e^{145}+e^{247},e^{256}-e^{147},e^{147}+e^{345},-e^{127}+e^{346},e^{237}+e^{456}}\\
\hline
0,0,12,0,13+24,14,15+23+1/2*(26+34) & e_{7}&[0,0,e^{12},0,e^{13}+e^{24},e^{14}]\\
\multicolumn{3}{|c|}{e^{123},e^{124},e^{125},e^{126},e^{134},2 e^{127}+e^{135},e^{136},e^{145},e^{146},4 e^{127}-2 e^{147}+e^{156},}\\
\multicolumn{3}{|c|}{e^{234},e^{235},-2 e^{127}+e^{236},2 e^{127}+e^{245},e^{246},2 e^{137}+e^{247}+e^{256},-e^{247}+e^{345},}\\
\multicolumn{3}{|c|}{-4 e^{127}+e^{346}+2 e^{147},e^{456}+4 e^{137}+2 e^{167}}\\
\hline
0,0,12,0,13+24,23,16+25 & e_{7}&[0,0,e^{12},0,e^{24}+e^{13},e^{23}]\\
\multicolumn{3}{|c|}{e^{123},e^{124},e^{125},e^{126},e^{127},e^{134},e^{136},e^{145},e^{146}+e^{135},e^{234},e^{235},e^{236},-e^{135}+e^{245},}\\
\multicolumn{3}{|c|}{e^{246},e^{156}+e^{137}+e^{247},e^{256}-e^{237},e^{267},e^{345}+e^{147},e^{346}+e^{156}+e^{137}}\\
\hline
0,0,12,0,13+24,23,15+26+34 & e_{7}&[0,0,e^{12},0,e^{24}+e^{13},e^{23}]\\
\multicolumn{3}{|c|}{e^{123},e^{124},e^{125},e^{126},e^{134},e^{127}+e^{135},e^{136},e^{145},e^{146}-e^{127},e^{234},}\\
\multicolumn{3}{|c|}{e^{235},e^{236},e^{237}-e^{156}+e^{147},e^{127}+e^{245},e^{246},e^{256}+e^{137},-e^{247}+e^{345},e^{346}+e^{147}}\\
\hline
0,0,12,0,13,23+24,15+26 & e_{7}&[0,0,e^{12},0,e^{13},e^{23}+e^{24}]\\
\multicolumn{3}{|c|}{e^{123},e^{124},e^{125},e^{126},e^{127},e^{134},e^{135},e^{145},e^{136}+e^{146},e^{234},e^{235},e^{236},-e^{136}+e^{245},}\\
\multicolumn{3}{|c|}{e^{246},e^{247}+e^{237}-e^{156},e^{256}+e^{137},-e^{157}+e^{267},e^{237}-e^{156}+e^{345},e^{147}+e^{346}}\\
\hline
0,0,12,0,13,23+24,16+25+34 & e_{7}&[0,0,e^{12},0,e^{13},e^{24}+e^{23}]\\
\multicolumn{3}{|c|}{e^{123},e^{124},e^{125},e^{126},e^{134},e^{135},e^{127}+e^{136},e^{145},-e^{127}+e^{146},e^{137}+e^{147}+e^{156},}\\
\multicolumn{3}{|c|}{e^{234},e^{235},e^{236},e^{127}+e^{245},e^{246},e^{256}-e^{237},e^{345}+e^{147},e^{346}-e^{247}}\\
\hline
\end{array}\]
\end{table}

\begin{table}
\[\begin{array}{|lcr|}
\hline
0,0,12,13,23,14-25,15+24 & e_{7}&[0,0,e^{12},e^{13},e^{23},-e^{25}+e^{14}]\\
\multicolumn{3}{|c|}{e^{123},e^{124},e^{125},e^{126},e^{127},e^{134},e^{135},e^{145}+e^{137},e^{147},e^{234},e^{235},}\\
\multicolumn{3}{|c|}{e^{236}+e^{137},-e^{136}+e^{237},-e^{136}+e^{245},e^{156}+e^{246},e^{247}+e^{157}-2 e^{146},}\\
\multicolumn{3}{|c|}{e^{256}+e^{146},e^{257},-e^{146}+e^{345}}\\
\hline
0,0,0,12,14+23,13-24,15-34 & e_{7}&[0,0,0,e^{12},e^{14}+e^{23},e^{13}-e^{24}]\\
\multicolumn{3}{|c|}{e^{123},e^{124},e^{125},e^{126},e^{134},e^{135},e^{136}+e^{127},e^{137},e^{145}+e^{127},e^{146},}\\
\multicolumn{3}{|c|}{e^{147},e^{234},e^{235}+e^{127},e^{236},e^{245},e^{246}-e^{127},e^{345}+e^{237},-e^{247}+e^{346}-e^{156}}\\
\hline
0,0,12,0,23,24,13+25+46 & e_{7}&[0,0,e^{12},0,e^{23},e^{24}]\\
\multicolumn{3}{|c|}{e^{123},e^{124},e^{125},e^{126},e^{134},e^{135},e^{145}+e^{136},e^{146},e^{234},e^{235},e^{236},}\\
\multicolumn{3}{|c|}{e^{245},e^{246},e^{247}-e^{136},e^{256},e^{267}-e^{147}-e^{156},e^{345}+e^{147},e^{346}-e^{127},e^{237}+e^{456}}\\
\hline
0,0,12,0,13+24,23,15+34-26 & e_{7}&[0,0,e^{12},0,e^{24}+e^{13},e^{23}]\\
\multicolumn{3}{|c|}{e^{123},e^{124},e^{125},e^{126},e^{134},e^{135}+e^{127},e^{136},e^{145},e^{146}-e^{127},e^{234},}\\
\multicolumn{3}{|c|}{e^{235},e^{236},-e^{156}-e^{147}+e^{237},e^{245}+e^{127},e^{246},e^{256}-e^{137},e^{345}-e^{247},e^{346}-e^{147}}\\
\hline
0,0,12,0,13,23+24,15-26 & e_{7}&[0,0,e^{12},0,e^{13},e^{24}+e^{23}]\\
\multicolumn{3}{|c|}{e^{123},e^{124},e^{125},e^{126},e^{127},e^{134},e^{135},e^{145},e^{146}+e^{136},e^{234},e^{235},e^{236},e^{245}-e^{136},}\\
\multicolumn{3}{|c|}{e^{246},-e^{156}+e^{237}+e^{247},-e^{137}+e^{256},e^{157}+e^{267},-e^{156}+e^{345}+e^{237},-e^{147}+e^{346}}\\
\hline
\end{array}\]
\end{table}

\begin{table}
  \caption{\label{table:step5} Step 5 nilpotent Lie algebras of dimension $7$}
\[\begin{array}{|lcr|}
\hline
0,0,12,13,14,15,23 & e_{6}(P)&[0,0,e^{12},e^{13},e^{14},e^{23}]\\
\hline
0,0,12,13,14,25-34,23 & e_{7}(P)&[0,0,e^{12},e^{13},e^{14},e^{25}-e^{34}]\\
\hline
0,0,12,13,14,15,25-34 & e_{6}(P)&[0,0,e^{12},e^{13},e^{14},e^{25}-e^{34}]\\
\hline
0,0,12,13,14,15+23,25-34 & e_{6}(P)&[0,0,e^{12},e^{13},e^{14},-e^{34}+e^{25}]\\
\hline
0,0,12,13,14+23,15+24,23 & e_{6}(P)&[0,0,e^{12},e^{13},e^{23}+e^{14},e^{23}]\\
\hline
0,0,12,13,14+23,25-34,23 & e_{7}(P)&[0,0,e^{12},e^{13},e^{14}+e^{23},e^{25}-e^{34}]\\
\hline
0,0,12,13,14+23,15+24,25-34 & e_{6}(P)&[0,0,e^{12},e^{13},e^{23}+e^{14},-e^{34}+e^{25}]\\
\hline
0,0,12,13,14,0,15+26 & e_{7}&[0,0,e^{12},e^{13},e^{14},0]\\
\multicolumn{3}{|c|}{e^{123},e^{124},e^{125},e^{126},e^{127},e^{134},e^{135},e^{136},e^{145},e^{146},e^{156},e^{167},}\\
\multicolumn{3}{|c|}{e^{234},e^{236},-e^{237}+e^{245},e^{246}+e^{137},e^{256}+e^{147},-e^{157}+e^{267},e^{147}+e^{346}}\\
\hline
0,0,12,13,14,0,15+23+26 & e_{7}&[0,0,e^{12},e^{13},e^{14},0]\\
\multicolumn{3}{|c|}{e^{123},e^{124},e^{125},e^{126},e^{127},e^{134},e^{135},e^{136},e^{145},e^{146},e^{156},e^{137}+e^{167},e^{234},e^{236},}\\
\multicolumn{3}{|c|}{-e^{237}+e^{245},e^{137}+e^{246},e^{147}+e^{256}-e^{235},-e^{157}+e^{267}+e^{237},e^{147}-e^{235}+e^{346}}\\
\hline
0,0,12,13,14,0,16+25-34 & e_{7}&[0,0,e^{12},e^{13},e^{14},0]\\
\multicolumn{3}{|c|}{e^{123},e^{124},e^{125},e^{126},e^{134},e^{135},e^{136},e^{145},e^{146},e^{156},e^{234},e^{127}+e^{235},}\\
\multicolumn{3}{|c|}{e^{236},e^{137}+e^{245},-e^{237}+e^{246},e^{147}+e^{345},e^{346}-e^{256}}\\
\hline
0,0,12,13,14+23,0,15+24+26 & e_{7}&[0,0,e^{12},e^{13},e^{14}+e^{23},0]\\
\multicolumn{3}{|c|}{e^{123},e^{124},e^{125},e^{126},e^{127},e^{134},e^{135},e^{136},e^{146},-e^{137}+e^{156}-e^{145},}\\
\multicolumn{3}{|c|}{e^{167}+e^{147},e^{234},e^{235}-e^{145},e^{236},-e^{237}+e^{245},e^{137}+e^{145}+e^{246},e^{147}+e^{256},}\\
\multicolumn{3}{|c|}{-\frac{1}{2} e^{157}-\frac{1}{2} e^{247}+\frac{1}{2} e^{267}+e^{345},e^{147}+e^{346}}\\
\hline
\end{array}\]
\end{table}

\begin{table}
\[\begin{array}{|lcr|}
\hline
0,0,12,13,14+23,0,16+25-34 & e_{7}&[0,0,e^{12},e^{13},e^{23}+e^{14},0]\\
\multicolumn{3}{|c|}{e^{123},e^{124},e^{125},e^{126},e^{134},e^{135},e^{136},e^{145}+e^{127},e^{146},e^{234},e^{127}+e^{235},}\\
\multicolumn{3}{|c|}{e^{236},e^{237}+e^{156},e^{137}+e^{245},e^{246}+e^{156},e^{147}+e^{345},e^{346}-e^{256}}\\
\hline
0,0,12,13,14,23,15+26 & e_{7}(P)&[0,0,e^{12},e^{13},e^{14},e^{23}]\\
\hline
0,0,12,13,14,23,16+24+25-34 & e_{7}(P)&[0,0,e^{12},e^{13},e^{14},e^{23}]\\
\hline
0,0,12,13,14,23,15+25+26-34 & e_{7}(P)&[0,0,e^{12},e^{13},e^{14},e^{23}]\\
\hline
0,0,12,13,0,14+25,16+35 & e_{7}&[0,0,e^{12},e^{13},0,e^{25}+e^{14}]\\
\multicolumn{3}{|c|}{e^{123},e^{124},e^{125},e^{126},e^{134},e^{135},e^{127}+e^{136},e^{137},e^{145},e^{156},e^{157},}\\
\multicolumn{3}{|c|}{e^{234},e^{235},e^{245}-e^{127},-e^{237}+e^{246},e^{256}-e^{146},}\\
\multicolumn{3}{|c|}{e^{345}+e^{146},-e^{257}+e^{356}-e^{147},-\frac{1}{2} e^{167}-\frac{1}{2} e^{357}+e^{456}}\\
\hline
0,0,12,13,0,14+25,16+25+35 & e_{7}&[0,0,e^{12},e^{13},0,e^{25}+e^{14}]\\
\multicolumn{3}{|c|}{e^{123},e^{124},e^{125},e^{126},e^{134},e^{135},e^{136}+e^{127},e^{137}+e^{127},e^{145},e^{156},}\\
\multicolumn{3}{|c|}{e^{157},e^{234},e^{235},e^{245}-e^{127},e^{246}-e^{237},e^{256}-e^{146},e^{345}+e^{146},}\\
\multicolumn{3}{|c|}{-e^{257}+e^{356}+e^{146}-e^{147},\frac{1}{2} e^{257}-\frac{1}{2} e^{357}+e^{456}-\frac{1}{2} e^{167}}\\
\hline
0,0,12,13,0,14+25,26-34 & e_{7}&[0,0,e^{12},e^{13},0,e^{25}+e^{14}]\\
\multicolumn{3}{|c|}{e^{123},e^{124},e^{125},e^{126},e^{134},e^{135},e^{145},e^{156},e^{234},e^{235},e^{236}+e^{127},}\\
\multicolumn{3}{|c|}{e^{237},e^{245}+e^{136},e^{246}+e^{137},e^{256}-e^{146},e^{146}+e^{345},e^{147}+e^{257}+e^{346}}\\
\hline
0,0,12,13,0,14+25,15+26-34 & e_{7}&[0,0,e^{12},e^{13},0,e^{14}+e^{25}]\\
\multicolumn{3}{|c|}{e^{123},e^{124},e^{125},e^{126},e^{134},e^{135},e^{145},e^{156},e^{234},e^{235},e^{127}+e^{236},}\\
\multicolumn{3}{|c|}{e^{136}+e^{237},e^{136}+e^{245},e^{137}+e^{246},e^{256}-e^{146},e^{345}+e^{146},e^{257}+e^{346}+e^{147}}\\
\hline
0,0,12,13,0,14+23+25,16+24+35 & e_{7}&[0,0,e^{12},e^{13},0,e^{14}+e^{25}+e^{23}]\\
\multicolumn{3}{|c|}{e^{123},e^{124},e^{125},e^{126},e^{134},e^{135},e^{136}+e^{127},e^{145},e^{156}-e^{127},e^{157}+e^{137}+e^{146},}\\
\multicolumn{3}{|c|}{e^{234},e^{235},e^{137}+e^{236},e^{245}-e^{127},-e^{237}+e^{246},e^{256}-e^{137}-e^{146},}\\
\multicolumn{3}{|c|}{e^{345}+e^{137}+e^{146},-e^{257}-e^{147}+e^{356}}\\
\hline
0,0,12,13,0,14+23+25,26-34 & e_{7}&[0,0,e^{12},e^{13},0,e^{23}+e^{14}+e^{25}]\\
\multicolumn{3}{|c|}{e^{123},e^{124},e^{125},e^{126},e^{134},e^{135},e^{145},e^{156}+e^{136},e^{234},e^{235},e^{236}+e^{127},}\\
\multicolumn{3}{|c|}{e^{237},e^{245}+e^{136},e^{246}+e^{137},e^{256}-e^{146}-e^{127},e^{345}+e^{146}+e^{127},e^{346}+e^{257}+e^{147}}\\
\hline
0,0,12,13,0,14+23+25,15+26-34 & e_{7}&[0,0,e^{12},e^{13},0,e^{23}+e^{25}+e^{14}]\\
\multicolumn{3}{|c|}{e^{123},e^{124},e^{125},e^{126},e^{134},e^{135},e^{145},e^{136}+e^{156},e^{234},e^{235},e^{127}+e^{236},e^{136}+e^{237},}\\
\multicolumn{3}{|c|}{e^{136}+e^{245},e^{137}+e^{246},-e^{127}+e^{256}-e^{146},e^{345}+e^{127}+e^{146},e^{346}+e^{147}+e^{257}}\\
\hline
0,0,12,13,23,15+24,16+34& \text{ resists }&\\
\multicolumn{3}{|c|}{e^{123},e^{124},e^{125},e^{126},e^{134},e^{135},e^{127}+e^{136},e^{137},-e^{127}+e^{145},e^{146},}\\
\multicolumn{3}{|c|}{e^{147},e^{234},e^{235},e^{245}-e^{236},e^{246}-e^{156}-2 e^{237},e^{256},e^{345}-e^{156}-e^{237},}\\
\multicolumn{3}{|c|}{e^{346}-e^{247}-e^{157},e^{267}-e^{357}+e^{456}}\\
\hline
0,0,12,13,23,15+24,16+25+34& \text{ resists }&\\
\multicolumn{3}{|c|}{e^{123},e^{124},e^{125},e^{126},e^{134},e^{135},e^{136}+e^{127},e^{145}-e^{127},e^{146},e^{234},}\\
\multicolumn{3}{|c|}{e^{235},e^{236}+e^{137},e^{237}+e^{147}+e^{156},e^{137}+e^{245},2 e^{147}+e^{156}+e^{246},e^{256},}\\
\multicolumn{3}{|c|}{e^{345}+e^{147},e^{346}-e^{247}-e^{157},e^{456}-e^{357}+e^{267}}\\
\hline
0,0,12,13,23,15+24,16+14+25+34 & e_{7}&[0,0,e^{12},e^{13},e^{23},e^{24}+e^{15}]\\
\multicolumn{3}{|c|}{e^{123},e^{124},e^{125},e^{126},e^{134},e^{135},e^{136}+e^{127},e^{145}-e^{127},e^{146},e^{234},}\\
\multicolumn{3}{|c|}{e^{235},e^{236}+e^{137},e^{237}+e^{156}+e^{147}-e^{127},e^{245}+e^{137},e^{246}+e^{156}+2 e^{147},}\\
\multicolumn{3}{|c|}{e^{256},e^{345}+e^{147},e^{346}-e^{247}-e^{157}}\\
\hline
\end{array}\]
\end{table}

\clearpage

\begin{table}[h]
 \[\begin{array}{|lcr|}
\hline
0,0,12,13,23,15+24,16+14+34 & e_{7}&[0,0,e^{12},e^{13},e^{23},e^{24}+e^{15}]\\
\multicolumn{3}{|c|}{e^{123},e^{124},e^{125},e^{126},e^{134},e^{135},e^{127}+e^{136},e^{137},-e^{127}+e^{145},e^{146},}\\
\multicolumn{3}{|c|}{e^{147},e^{234},e^{235},-e^{236}+e^{245},2 e^{127}-2 e^{237}-e^{156}+e^{246},}\\
\multicolumn{3}{|c|}{e^{256},e^{127}-e^{237}-e^{156}+e^{345},-e^{247}-e^{157}+e^{346}}\\
\hline
0,0,12,13,23,15+24,16+26+34-35 & e_{7}&[0,0,e^{12},e^{13},e^{23},e^{24}+e^{15}]\\
\multicolumn{3}{|c|}{e^{123},e^{124},e^{125},e^{126},e^{134},e^{135},e^{136}+e^{145},e^{146},e^{234},e^{235},e^{127}+e^{136}+e^{236},}\\
\multicolumn{3}{|c|}{e^{137}+e^{237},e^{127}+e^{136}+e^{245},e^{246}+2 e^{137}-e^{156},}\\
\multicolumn{3}{|c|}{e^{256},e^{345}+e^{137}-e^{156},-e^{346}+e^{356}+e^{247}+e^{157}}\\
\hline
0,0,0,12,14+23,15-34,16-35 & e_{7}&[0,0,0,e^{12},e^{23}+e^{14},-e^{34}+e^{15}]\\
\multicolumn{3}{|c|}{e^{123},e^{124},e^{125},e^{134},e^{135},e^{136},e^{137},e^{126}+e^{145},e^{146},e^{234},e^{126}+e^{235},e^{127}+e^{236},}\\
\multicolumn{3}{|c|}{e^{237}+e^{147}+e^{156},e^{245},-e^{127}+e^{345},-2 e^{147}+e^{346}-e^{156},-e^{347}+e^{356}+e^{157}}\\
\hline
0,0,0,12,14+23,15-34,16+23-35 & e_{7}&[0,0,0,e^{12},e^{23}+e^{14},e^{15}-e^{34}]\\
\multicolumn{3}{|c|}{e^{123},e^{124},e^{125},e^{134},e^{135},e^{136},e^{137},e^{145}+e^{126},e^{146},e^{234},e^{235}+e^{126},}\\
\multicolumn{3}{|c|}{e^{236}+e^{127},e^{237}+e^{126}+e^{156}+e^{147},e^{245},e^{345}-e^{127},}\\
\multicolumn{3}{|c|}{e^{346}-2 e^{126}-e^{156}-2 e^{147},e^{356}-e^{347}+e^{127}+e^{157}}\\
\hline
0,0,0,12,14+23,15-34,16+24-35 & e_{7}&[0,0,0,e^{12},e^{14}+e^{23},e^{15}-e^{34}]\\
\multicolumn{3}{|c|}{e^{123},e^{124},e^{125},e^{134},e^{135},e^{136},-e^{126}+e^{137},e^{126}+e^{145},e^{146},}\\
\multicolumn{3}{|c|}{e^{234},e^{126}+e^{235},e^{236}+e^{127},e^{156}+e^{147}+e^{237},e^{245},}\\
\multicolumn{3}{|c|}{e^{345}-e^{127},-e^{156}+e^{346}-2 e^{147},-e^{246}+e^{356}-e^{347}+e^{157}}\\
\hline
0,0,12,13,23,24+15,16+14-25+34 & e_{7}&[0,0,e^{12},e^{13},e^{23},e^{24}+e^{15}]\\
\multicolumn{3}{|c|}{e^{123},e^{124},e^{125},e^{126},e^{134},e^{135},e^{127}+e^{136},-e^{127}+e^{145},e^{146},e^{234},}\\
\multicolumn{3}{|c|}{e^{235},e^{236}-e^{137},-e^{127}+e^{237}-e^{147}+e^{156},-e^{137}+e^{245},-2 e^{147}+e^{156}+e^{246},}\\
\multicolumn{3}{|c|}{e^{256},-e^{147}+e^{345},-e^{247}-e^{157}+e^{346}}\\
\hline
0,0,12,13,23,-14-25,16-35 & e_{7}(P)&[0,0,e^{12},e^{13},e^{23},-e^{14}-e^{25}]\\
\hline
0,0,12,13,23,-14-25,16-35+25 & e_{7}(P)&[0,0,e^{12},e^{13},e^{23},-e^{25}-e^{14}]\\
\hline
0,0,0,12,14+23,15-34,16-23-35 & e_{7}&[0,0,0,e^{12},e^{23}+e^{14},e^{15}-e^{34}]\\
\multicolumn{3}{|c|}{e^{123},e^{124},e^{125},e^{134},e^{135},e^{136},e^{137},e^{145}+e^{126},e^{146},e^{234},e^{235}+e^{126},}\\
\multicolumn{3}{|c|}{e^{236}+e^{127},e^{237}-e^{126}+e^{156}+e^{147},e^{245},}\\
\multicolumn{3}{|c|}{e^{345}-e^{127},e^{346}+2 e^{126}-e^{156}-2 e^{147},e^{356}-e^{347}-e^{127}+e^{157}}\\
\hline
\end{array}\]
\end{table}

\bigskip

\noindent {\bf Acknowledgments.} This work has been partially
supported through Project MICINN (Spain) MTM2008-06540-C02-01.

\smallskip

{\small

\small\noindent Dipartimento di Matematica e Applicazioni, Universit\`a di Milano Bicocca,  Via Cozzi 53, 20125 Milano, Italy.\\
\texttt{diego.conti@unimib.it}

\smallskip
\small\noindent Universidad del Pa\'{\i}s Vasco, Facultad de Ciencia y Tecnolog\'{\i}a, Departamento de Matem\'aticas,
Apartado 644, 48080 Bilbao, Spain. \\
\texttt{marisa.fernandez@ehu.es}\\


\begin{thebibliography}{33}

\bibitem{ApostolovSalamon}
V.~Apostolov and S.~Salamon, K\"ahler reduction of metrics with holonomy $\Gtwo$,
{\em Comm. Math. Phys.\/} 246 (2004) 43--61.

\bibitem{BensonGordon}
C.~Benson and C.~Gordon, K\"ahler and symplectic structures on nilmanifolds,
{\em Topology\/} 27 (1988) 513--518.

\bibitem{Bryant}
R.~L.~Bryant,
Some remarks on $G_2$-structures.
{\em Proceedings of G\"okova Geometry-Topology Conference 2005\/},  G\"okova Geometry/Topology Conference (GGT), G\"okova, 2006, pp. 75--109.

\bibitem{BrS}
R.~Bryant and S.~Salamon, On the construction of some complete metrics
with exceptional holonomy,
{\em Duke Math. J.\/}  58 (1989) 829--850.

\bibitem{ChiossiFino}
S.~Chiossi and A.~Fino, Conformally parallel $G_2$ structures on a class of
  solvmanifolds, {\em Math. Z.\/} 252 (2006) 825--848.

\bibitem{CleytonIvanov}
R. Cleyton and S. Ivanov,
On the geometry of closed $G_2$-structures,
{\em Comm. Math. Phys.\/} 270 (2007) 53--67.

\bibitem{CleytonSwann}
R. Cleyton and A. Swann,
Cohomogeneity-one $G_2$-structures,
{\em J. Geom. Phys.\/} 44 (2002) 202--220.

\bibitem{ContiTomassini}
D.~Conti and A.~Tomassini,
Special symplectic six-manifolds,
{\em Q. J. Math.\/} 58 (2007) 297--311.

\bibitem{Conti:HalfFlat}
D.~Conti, Half-flat nilmanifolds, 2010, {\em Math. Ann.\/}. In press.

\bibitem{FernandezGray}
M.~Fern\'andez and A.~Gray, Riemannian manifolds with structure group $G_2$,
{\em Annali di Mat. Pura Appl.\/} 32 (1982) 19--45.

\bibitem{Fernandez}
M.~Fern\'andez, An example of a compact calibrated manifold associated
with the exceptional Lie group $\Gtwo$,
{\em J. Differ. Geom.\/}  26 (1987): 367--370.

\bibitem{Gong}
M-P.~Gong, {\em Classification of nilpotent Lie algebras of dimension 7
(over algebraically closed fields and $\R$)}, Ph. D. Thesis, University
of Waterloo, Ontario, Canada, 1998.

\bibitem{Gray}
A. Gray, Vector cross products on manifolds, {\em Trans. Amer. Math. Soc.} 141 (1969) 463--504.

\bibitem{HarveyLawson}
R.~Harvey and H.~B.~Lawson, Calibrated geometries,
{\em Acta Math.\/} 148 (3) (1982) 47--157.

\bibitem{Hitchin:StableForms}
N.~Hitchin,
Stable forms and special metrics,
 in {\em Global Differential Geometry: The Mathematical Legacy of
  Alfred Gray} volume 288 of {\em Contemp. Math.\/}, Amer.
  Math. Soc., 2001, pp. 70--89.

\bibitem{Joyce}
D.D.~Joyce,
Compact Riemannian $7$-manifolds with holonomy $\Gtwo$, I and II,
{\em J. Diff. Geom.\/} 43 (1996) 291--328 and 329--375.

 \bibitem{Magnin}
 L.~Magnin, Sur les alg\`ebres de Lie nilpotentes de dimension $\leq 7$,
 {\em J. Geom. Phys.\/}  3 (1986) 119--144.

\bibitem{Malcev}
A.~I. Malcev.
\newblock On a class of homogeneous spaces, reprinted in
\newblock {\em Amer. Math. Soc. Translations} Series 1, 9 (1962), 276--307.

\bibitem{Salamon}
S.~Salamon,
Complex structures on nilpotent Lie algebras,
{\em J. Pure Appl. Algebra\/} 157 (2001)  311--333.


\end{thebibliography}
 \end{document}